\newtheorem{thm}{Theorem}[section]
\newtheorem{lem}[thm]{Lemma}
\newtheorem{prop}[thm]{Proposition}
\newtheorem{cor}[thm]{Corollary}
\newtheorem{defprop}[thm]{Definition/Proposition}
\theoremstyle{definition}
\newtheorem{defn}[thm]{Definition}
\newtheorem{rmk}[thm]{Remark}
\newcommand{\leqp}{%
  \mathrel{\raisebox{-0.5ex}{$\scriptscriptstyle($}}%
  \leq
  \mathrel{\raisebox{-0.5ex}{$\scriptscriptstyle)$}}%
}
\numberwithin{equation}{section}
\begin{document}

\title[Projectivity of moduli spaces]{Projectivity of Moduli Spaces in the Higher-Rank DT/PT Correspondence}
\date{\today}
\keywords{moduli spaces of coherent sheaves, PT-stability, Bridgeland stability}
\subjclass[2020]{14D20, 14D23}

\author{Mihai Pavel}
\address{Mihai Pavel\newline
	\indent Simion Stoilow Institute of Mathematics of the Romanian Academy\newline 
\indent 21 Calea Grivitei Street, 010702, Bucharest, Romania}
\email{cpavel@imar.ro}

\author{Tuomas Tajakka}
\address[]{Tuomas Tajakka}
\email{tuomas.tajakka@gmail.com}

\begin{abstract}
We study projectivity of moduli spaces on the DT/PT wall crossing in Bridgeland and polynomial stability on a smooth, projective threefold. First, we construct a globally generated line bundle on the moduli stack of higher-rank PT-semistable objects and analyze the extent to which it separates points. Next, when the rank and degree are coprime, we refine our construction to obtain an explicit ample line bundle on the corresponding coarse moduli space of PT-stable objects, thereby establishing its projectivity. 

Finally, we consider certain classes of threefolds for which PT-stability is realized as a Bridgeland stability condition and establish projectivity also on the wall separating the Gieseker and PT chambers.

\end{abstract}

\maketitle

\section{Introduction}\label{sect:Introduction}
The DT/PT correspondence, as formulated by Pandharipande and Thomas in \cite{PT}, establishes a relationship between two curve-counting invariants on a projective Calabi--Yau threefold $X$. On one side are the classical Donaldson--Thomas (DT) invariants constructed by Thomas in \cite{ThomasDT}, which virtually count $1$-dimensional closed subschemes of $X$. The moduli space of these objects is the Hilbert scheme of curves. On the other side are the Pandharipande--Thomas (PT) invariants introduced in \cite{PT}, which virtually count stable pairs on $X$. By definition, a stable pair $(F,s)$ on $X$ is a $2$-term complex of the form
\[
    \Oh_X  \xrightarrow{s} F
\]
where $F$ is a pure sheaf of dimension $1$ and the cokernel of $s$ is $0$-dimensional. Moduli spaces of stable pairs can be constructed as GIT quotients of coherent systems, in the sense of Le Potier \cite{lePotier1993Systems}. They are moreover projective, in parallel with the corresponding moduli spaces in DT theory. 

The objects underlying the two enumerative theories can be described in terms of certain stability conditions on the derived category of coherent sheaves on $X$, which arise as ``large volume limits" of Bridgeland stability conditions. This connection was independently observed by Bayer \cite{bayer-polynomial} and Toda \cite{toda-limitstable}, who interpreted the DT/PT correspondence as a wall-crossing phenomenon within a family of such stability conditions. Moreover, this approach led to the formulation of a higher-rank version of the DT/PT correspondence, which was subsequently proved by Toda \cite{Toda-DT/PT} along with the original (rank-$1$) case. An alternative proof of the rank-$1$ DT/PT correspondence, also based on wall-crossing techniques, was previously obtained by Bridgeland \cite{Bridgeland-DT/PT} (see also \cite{Toda-DT/PT-I, Stoppa-Thomas} for the Euler-characteristic analogue of the correspondence).

The moduli spaces arising in the (higher-rank) DT/PT correspondence can be defined over any polarized, smooth, complex projective threefold $(X,H)$.  On the higher-rank DT side, we have the moduli space of $H$-Gieseker-semistable sheaves on $X$, which can be constructed as a projective GIT quotient \cite{gieseker77,maruyama78moduli}. This aligns with the rank-$1$ case, since the Hilbert scheme of curves can be identified with the Gieseker moduli space of rank-$1$ torsion-free sheaves with trivial determinant.
Our first goal is to establish projectivity on the PT side in higher rank by studying the moduli space of PT-stable objects. We will recall the notion of PT-stability on the derived category of coherent sheaves on $X$ in \cref{sect:PT-semistable objects}, in a framework that recovers the stable pairs of Pandharipande and Thomas as stable objects of rank $1$ and trivial determinant. 

Moduli spaces of higher-rank PT-stable objects provide alternative compactifications of the moduli of $\mu$-stable reflexive sheaves. Indeed, the moduli theory of PT-semistable objects was developed by J. Lo in \cite{lo-PT1, lo-PT2}, who showed that the moduli stack of PT-semistable objects with fixed Chern character is a universally closed algebraic stack of finite type, which moreover admits a proper coarse moduli space in the absence of strictly semistable objects. However, unlike the rank 1 and trivial determinant case 
considered by Pandharipande and Thomas, the moduli problem for higher-rank PT-stable objects is not known to be associated with a GIT problem, and hence it is unknown whether the moduli spaces are projective. In this paper, we answer this question affirmatively in the case of coprime rank and degree, thereby confirming a result expected in the field (see \cite[Remark 3.22]{toda-limitstable}). 

In their recent work, Jardim, Lo, Maciocia, and Martinez \cite{JLMM2025HigherDTPT} studied the wall-crossing transformation between the DT and PT moduli spaces in the framework of Bridgeland stability conditions. Their analysis is carried out for a projective threefold $X$ of Picard rank $1$ admitting Bridgeland stability conditions constructed via the double-tilt procedure of \cite{bayer2014bridgeland}. Such stability conditions are known to exist for threefolds in the following cases: Fano threefolds \cite{macriBGthreefold, Schmidt2023Sheaves, LiBridgeland}, abelian threefolds \cite{MaciociaPiyaratne, BMS-bridgeland}, quintic threefolds \cite{LiQuintic}, threefolds with nef tangent bundles \cite{KosekiNefTangent}, general weighted hypersurfaces in $\P(1, 1, 1, 1, 2)$ or $\P(1, 1, 1, 1, 4)$ \cite{Koseki}, and Calabi-Yau threefolds given as the complete intersection of a quadric and a quartic in $\P^5$ \cite{LiuStability}. 

Let $v \in \Knum(X)$ be a numerical class of positive rank such that $\rk(v)$ and $H^2\ch_1(v)$ are coprime.
In this case, it was shown in \cite{JLMM2025HigherDTPT} that both the Gieseker moduli space $\rM^\mathrm{\mu}(v)$ of $\mu$-stable sheaves of class $v$ and the moduli space $\rM^{\mathrm{PT}}(v)$ of PT-stable objects can be realized as Bridgeland moduli spaces. Moreover, these spaces are separated by a single wall in the space $\Stab(X)$ of Bridgeland stability conditions. They can be connected by a wall-crossing diagram
\[
    \xymatrix{ \rM^\mathrm{\mu}(v) \ar[rd] &  &  \rM^{\mathrm{PT}}(v) \ar[ld]  \\  & \rM_{\overline{\sigma}}(v)  &  }
\]
where $\overline{\sigma} \in \Stab(X)$ is a stability condition on the separating wall, and $\rM_{\overline{\sigma}}(v)$ is the corresponding Bridgeland moduli space of $\overline{\sigma}$-semistable objects of class $v$  (see op. cit. or \cref{sec:Bridgeland} for details). As noted above, it is known that the Gieseker moduli space $\rM^\mathrm{\mu}(v)$ is projective. This paper completes the picture by showing that $\rM^{\mathrm{PT}}(v)$ and $\rM_{\overline{\sigma}}(v)$ are also projective schemes. 

\subsection*{Main results}
We summarize our main results and the ideas underlying them in the following.

\begin{thm}\label{thm:MainProj}
Let $(X,H)$ be a polarized, smooth, projective threefold over $\C$, and let $v \in \Kn(X)$ be a class of positive rank such that $\rk(v)$ and $H^2\ch_1(v)$ are coprime.
\begin{enumerate}[(a)]
    \item The coarse moduli space $\rM^{\mathrm{PT}}_Z(v)$ of PT-stable objects is projective for any PT-stability function $Z$ on $(X,H)$. 
    \item Assume that $X$ is a projective threefold of Picard rank $1$ admitting double-tilt Bridgeland stability conditions. Then the
    Bridgeland moduli space $\rM_{\overline{\sigma}}(v)$ is projective for a stability condition $\overline{\sigma} \in \Stab(X)$ on the wall separating the Gieseker and PT chambers.
\end{enumerate}
\end{thm}

We begin our study with some general results that do not require the coprime assumption. In \cref{sect:moduli spaces}, we introduce a quasi-compact algebraic stack $\cM_X(v)$, which contains both the stack $\cM^{\mathrm{Gss}}_H(v)$ of $H$-Gieseker-semistable sheaves and the stack $\cM^{\mathrm{PT}}_Z(v)$ of PT-semistable objects. We prove in \cref{thm:SemiAmple} the existence of a \textit{semiample} determinantal line bundle $\cL_1$ on $\cM_X(v)$, and then describe in \cref{subsect:SeparationResult} the extent to which its sections separate points. A key ingredient here is provided by the Mehta-Ramanathan type restriction theorems \cite{mehta1982semistable,mehta1984restriction,pavel2022restriction}. This restriction method originates from the works of Le Potier \cite{le1992fibre} and J. Li \cite{li1993algebraic}, who employed it to construct an algebro-geometric analogue of the Donaldson-Uhlenbeck compactification. In our case, we prove that the restriction of a $\C$-point of $\cM_X(v)$ (and in particular of a PT-semistable object) to a general surface $D \in |aH|$ of sufficiently large degree is isomorphic to a Gieseker-semistable sheaf. This, in turn, makes it possible to construct determinantal $\theta$-sections of $\cL_1$ using the results of Álvarez-Cónsul and King \cite[Section 7]{alvarez2007functorial}.

However, the line bundle $\cL_1$ does not separate the $\C$-points of $\cM^{\mathrm{PT}}_Z(v)$ (nor those of $\cM^{\mathrm{Gss}}_H(v)$). We improve its positivity by twisting it with another determinantal line bundle $\cL_0$, in a manner analogous to the construction for the Gieseker moduli space \cite[Section 8]{HL}. Indeed, when the rank and degree are coprime, we show in \cref{thm:MainAmpleness} that a positive linear combination of $\cL_1$ and $\cL_0$ descends to an \textit{ample} line bundle on the coarse moduli space $\rM^{\mathrm{PT}}_Z(v)$ of PT-stable objects, thus establishing part (a).

Our proof of the projectivity of $\rM^\mathrm{PT}_Z(v)$ relies crucially on the coprime assumption, which implies three useful properties. First, it ensures that there are no strictly PT-semistable objects, which in particular guarantees the existence of the proper coarse moduli space $\rM^{\mathrm{PT}}_Z(v)$. Second, it allows for a concrete characterization of PT-stable objects due to J. Lo, see \cref{prop:PT-partial-characterization}, which is helpful for describing certain strata of $\rM^{\mathrm{PT}}_Z(v)$ \cite{Lo2021PTstableQuotients}. Third, under the coprime assumption, Beentjes and Ricolfi constructed a stratification of the moduli space of PT-stable objects by relative Quot schemes, see \cite[Proposition 5.5 and Appendix B]{Beentjes2021DT/PT}. We use this last property to study the positivity of the line bundle $\cL_0$ (see \cref{sect:projPT}).

We address part (b) in \cref{subsect:projBridgeland}. The results of \cite{JLMM2025HigherDTPT} allow us to identify $\cM_X(v)$ with the Bridgeland moduli stack $\cM_{\overline{\sigma}}(v)$ for a stability condition $\overline{\sigma} \in \Stab(X)$ on the wall separating the Gieseker and PT chambers. We then show that $\cL_1$ descends to an ample line bundle on the good moduli space $\rM_{\overline{\sigma}}(v)$. In fact, our proof of the projectivity of $\rM_{\overline{\sigma}}(v)$ does not rely on the coprime assumption, although a wall-crossing diagram as above is currently only known in the coprime case. This result can be seen as a threefold analogue of the main theorem in \cite{tajakka2022uhlenbeck}, which addresses the case of Bridgeland stability conditions on surfaces.

\subsection*{Acknowledgements}
M.P. would like to thank Daniel Greb, Jochen Heinloth, Emanuele Macrì, Cameron Ruether, Saket Shah, Andrei Stoenic\u a and Matei Toma for helpful conversations and insightful suggestions related to the project. M.P. was partially supported by the PNRR grant CF 44/14.11.2022 \textit{Cohomological Hall algebras of smooth surfaces and applications}, and by a grant of the Ministry of Research, Innovation and Digitalization, CNCS-UEFISCDI, project number PN-IV-P2-2.1-TE-2023-2040, within PNCDI IV.

T.T. would like to thank Jarod Alper, Aaron Bertram, Jochen Heinloth, Nicholas Kuhn, Jason Lo, and Siddharth Mathur for insightful conversations. T.T. was partially supported by the Knut and Alice Wallenberg foundation under grant no. KAW 2019.0493, as well as enjoyed the hospitality of Institut Mittag-Leffler during the program Moduli and Algebraic Cycles in Autumn 2021. Some of the material in this article appeared in T.T.'s doctoral dissertation at the University of Washington \cite{tajakka2021thesis}.

\section{Preliminaries}\label{sect:prelim}
Throughout this paper, we fix a smooth, projective, connected threefold $X$ over $\C$, together with a very ample line bundle $\cO_X(1)$ on $X$. 

In this section, we recall a notion of semistability for coherent sheaves on $X$ following \cite[Section 1.6]{HL}. Let $\Coh(X)$ be the abelian category of coherent sheaves on $X$. For an integer $0 \le e \le 2$, we denote by $\Coh_{3,e}(X)$ the quotient category $\Coh(X)/\Coh_{e-1}(X)$, where $\Coh_{e-1}(X)$ is the full subcategory of $\Coh(X)$ whose objects are sheaves of dimension $\le e-1$. 

For every $E \in \Coh(X)$, let
\begin{equation}\label{eq:hilbPol}
    \rP(E,m) = \int_X \ch(E\otimes \cO_X(m))\td_X = \sum_{i = 0}^3 a_i(E) {m+3-i \choose 3-i}
\end{equation} 
be the Hilbert polynomial of $E$ with respect to the polarization $\cO_X(1)$. By linearity, the Hilbert polynomial extends to an additive function
\[
    \rP : \rK(X) \to \Q[m].
\]
For $0 \le e \le 2$, we define the $e$-truncated Hilbert polynomial by
\[
    \rP_e : \rK(X) \to \Q[m],\quad \rP_e(E,m) = \sum_{i = 0}^{3-e} a_i(E) {m+3-i \choose 3-i}.
\]
Moreover, when $E \in \Coh(X)$ is torsion-free, we denote by 
\[
    \rp_e(E,m) = \frac{\rP_e(E,m)}{\rk(E)}
\]
the reduced $e$-truncated Hilbert polynomial of $E$.

\begin{defn}
Let $1 \le d \le 3$ be an integer and set $e = 3-d$. We say that a coherent sheaf $E$ on $X$ is \textbf{$d$-Gieseker-(semi)stable} if it is torsion-free, and for every coherent subsheaf $0 \neq F \subsetneq E$
\[
    \rp_{e}(F,m) \leqp \rp_{e}(E,m)\quad \text{for }m \gg 0.
\]
\end{defn}

One can easily check that $1$-Gieseker-stability is equivalent to the classical notion of $\mu$-stability, and that $3$-Gieseker-stability is the usual notion of Gieseker-stability. We also have
\begin{align*}
 \mu\text{-stable} \Rightarrow 2\text{-Gieseker-stable} \Rightarrow \text{Gieseker-stable} \Rightarrow \\
 \text{Gieseker-semistable} \Rightarrow 2\text{-Gieseker-semistable} \Rightarrow \mu\text{-semistable}.
\end{align*}
Note that for torsion-free coherent sheaves of coprime rank and degree, all these conditions are in fact equivalent.

In general, every $d$-Gieseker-semistable sheaf $E$ admits a Jordan-H\"older filtration
\[
    E_\bullet: 0 = E_0 \subset E_1 \subset \ldots \subset E_m = E
\]
such that each $E_i/E_{i-1}$ is $d$-Gieseker-stable and $\rp_{3-d}(E_i/E_{i-1})=\rp_{3-d}(E)$. We denote by $\gr(E_\bullet) = \bigoplus_i E_i/E_{i-1}$ the associated graded module of $E_\bullet$. The sheaf $\gr(E_\bullet)$ is uniquely determined in $\Coh_{3,3-d}(X)$, cf. \cite[Theorem 1.6.7]{HL}. We say that two $d$-Gieseker-semistable sheaves $E_1$, $E_2$ are \textbf{$S$-equivalent} if their corresponding graded modules are isomorphic in $\Coh_{3,3-d}(X)$ (that is, they are isomorphic outside a closed subset of codimension $d + 1$ in $X$).

\section{Moduli stacks of complexes}\label{sect:moduli spaces}

Let $\cD_{\pug}$ be the stack of \textit{relatively perfect universally gluable} complexes on $X$, introduced by Lieblich in \cite{lie06}. Namely, $\cD_{\pug}$ is the stack over the big \'etale site of $\C$-schemes which to a $\C$-scheme $S$ associates the groupoid of objects $E \in \Db(S \times X)$ such that
\begin{enumerate}
    \item $E$ is relatively perfect over $S$, see \cite[Tag 0DHZ]{stacks-project}, and
    \item $E$ is universally gluable, i.e., for all geometric points $s \in S$, the derived restriction $E_s \coloneqq E|^\LL_{\{s\} \times X} \in \Db(X)$ to the fiber over $s$ satisfies $$\Ext^i_{\Db(X)}(E_{s},E_{s}) = 0 \quad \text{for } i < 0.$$
\end{enumerate}
Lieblich showed that $\cD_{\pug}$ is an algebraic stack locally of finite type over $\C$, containing many open substacks of interest, such as the stack of flat families of sheaves on $X$ and stacks of tilting hearts arising in the context of Bridgeland stability \cite[Appendix A]{ABL13}. He referred to $\cD_{\pug}$ as the ``mother of all moduli spaces (of sheaves)''.

Consider the following full subcategories
\begin{align*}
    \Coh_{\le 1}(X) ={}& \{ T \in \Coh(X) \mid \dim(T) \le 1 \},\\
    \Coh_{\ge 2}(X) ={}& \{ F \in \Coh(X) \mid \dim(F') \ge 2 \text{ for every coherent subsheaf } F' \subs F \},
\end{align*}
which form a torsion pair on $\Coh(X)$. By tilting along this pair we obtain the heart of a bounded $t$-structure on $\Db(X)$, known as the \textit{heart of perverse sheaves},
\[
    \cA^p(X) = \langle  \Coh_{\ge 2}(X), \Coh_{\le 1}(X)[-1] \rangle.
\]
In other words, $\sA^p(X) \subseteq \Db(X)$ is the full subcategory consisting of objects $E \in \Db(X)$ that fit in an exact triangle
\[ F \to E \to T[-1], \]
where $F \in \Coh_{\ge 2}(X)$ and $T \in \Coh_{\le 1}(X)$. 

\begin{rmk}\label{rmk:convention0}
Our definition of the heart $\sA^p(X)$ differs from that in \cite{lo-PT1}, \cite{lo-PT2}, and \cite{bayer-polynomial} by a shift: the nonzero cohomology sheaves are in degrees $0$ and $1$ rather than $-1$ and $0$. The reason for this choice is purely psychological: if $E \in \Coh(X)$ is a torsion-free sheaf, then $E$ itself, rather than $E[1]$, is contained in $\sA^p(X)$.
\end{rmk}

Following the terminology in \cite[Appendix A]{ABL13}, we consider the stack of torsion theories $(\cT, \cF)$, where $\cT$ (resp. $\cF$) is the stack of flat families of coherent sheaves in $\Coh_{\le 1}(X)$ (resp. in $\Coh_{\ge 2}(X)$). Let $\cD_{\cA^p} \subset \cD_{\pug}$ be the substack of tilted hearts with respect to $(\cT,\cF)$, i.e. the substack whose objects over a $\C$-schemes $S$ are $S$-perfect complexes $E \in \Db(S \times X)$ satisfying: for every geometric point $s \in S$, the derived restriction $E_{s} \in \Db(X)$ to the fiber over $s$ lies in the heart $\cA^p(X)$.

As in \cite[Appendix A, Example 1]{ABL13}, one can show that $(\cT,\cF)$ is \textit{open}, and therefore $\cD_{\cA^p}$ is an \textit{open} substack of $\cD_{\pug}$, cf. \cite[Corollary A4]{ABL13}. Consequently, $\cD_{\cA^p}$ is an algebraic stack locally of finite type over $\C$. 

We consider the substack $\cM_X \subset \cD_{\cA^p}$ whose objects over a $\C$-schemes $S$ are $S$-perfect complexes $E \in \Db(S \times X)$ such that for every geometric point $s \in S$, the derived restriction $E_{s}$ to the fiber over $s$ lies in $\cA^p(X)$ and satisfies: 
\begin{enumerate}[(A)]
        \item\label{eq:A} $\sH^0(E_{s})$ is $2$-Gieseker-semistable,
        \item\label{eq:B} $\sH^1(E_{s})$ is 0-dimensional.
    \end{enumerate}   
Since $E_{s}$ lies in the heart $\cA^p(X)$, we have $\sH^i(E_{s}) = 0$ for all $i \neq 0,1$.

Let $v \in \Kn(X)$ be a numerical class of positive rank. We denote by $\cM_X(v) \subset \cM_X$ the substack of objects of fixed numerical class $v$. We recall the definition of a bounded family of sheaves, cf. \cite[Definition 1.7.5]{HL}, which will be used in the following result.

\begin{defn}
A family $\frak{F}$ of coherent sheaves on $X$ is called \textbf{bounded} if there exists a scheme $S$ of finite type over $\C$ and a coherent sheaf $\cF \in \Coh(S \times X)$ such that every sheaf $F \in \frak{F}$ is isomorphic to a fiber $\cF_s \coloneqq \cF|_{\{s\} \times X}$ for some closed point $s \in S$.
\end{defn}

\begin{prop}\label{prop:qCompact}
 The stack $\cM_X(v)$ is a quasi-compact, open substack of $\cD_{\cA^p}$.
 \end{prop}
 \begin{proof}
 One sees as in \cite[Proposition 3.1]{lo-PT2} that properties \eqref{eq:A} and \eqref{eq:B} above together form an open condition. Therefore $\cM_X(v)$ is an open substack of $\cD_{\cA^p}$.
 
The quasi-compactness of $\cM_X(v)$ follows from the proof of \cite[Proposition 3.4]{lo-PT1}. We briefly sketch the argument here. Recall that every $\C$-point $E \in \cM_X(v)$ fits in an exact triangle of the form
\[
    \cH^0(E) \to E \to \cH^1(E)[-1]
\]
such that $\cH^0(E) \in \Coh(X)$ is $2$-Gieseker-semistable and $\cH^1(E) \in \Coh(X)$ is $0$-dimensional. By the additivity of the Hilbert polynomial in exact triangles, we obtain
\[
    \rP_{1}(\cH^0(E)) = \rP_{1}(v), \quad a_3(\cH^0(E)) = a_3(v) + a_3(\cH^1(E)),
\]
where $a_3(-)$ denotes the constant term of the Hilbert polynomial, as in \eqref{eq:hilbPol}. Since $\cH^1(E)$ is $0$-dimensional, we also have
\[
    a_3(\cH^1(E)) = \length(\cH^1(E)) \ge 0,
\]
and thus $a_3(\cH^0(E)) \ge a_3(v)$. Therefore, the sheaves $\cH^0(E)$ are contained in the family of $\mu$-semistable sheaves $F \in \Coh(X)$ satisfying $a_i(F) = a_i(v)$ for $i = 0,1,2$, and $a_3(F) \ge a_3(v)$, which is bounded by \cite[Theorem 4.8]{maruyama81on}. In particular we see that $a_3(\cH^0(E))$ can take only finitely many values, which shows that the length of $\cH^1(E)$ is uniformly bounded by some integer $N$ independent of $E \in \cM_X(v)$. Since the moduli space of $0$-dimensional sheaves of length $\le N$ on $X$ is quasi-compact, we obtain the boundedness of the family
\[
     \{ \, \cH^1(E) \mid E \in \cM_X(v)(\C) \, \}. 
\]
Hence, $\cM_X(v)$ is quasi-compact, since all its $\C$-points are extensions of coherent sheaves belonging to two bounded families.
 \end{proof}

\section{Determinantal line bundles}


In this section we review the construction and properties of determinantal line bundles on algebraic stacks. Additionally, we construct a \textit{semiample} determinantal line bundle on the stack $\cM_X(v)$ introduced in the previous section.

\subsection{Preliminaries on determinantal line bundles}\label{subsect:detLB}

Let $S$ be an algebraic stack of finite type over $\C$, and let $\sE \in \Db(S \times X)$ be a complex relatively perfect over $S$. Consider the diagram
\begin{center}
    \begin{tikzpicture}
    \matrix (m) [matrix of math nodes, row sep=1em, column sep=1em]
    { & S \times X & \\
    S & & X \\};
    \path[->] 
    (m-1-2) edge node[auto,swap] {$ p $} (m-2-1)
    (m-1-2) edge node[auto] {$ q $} (m-2-3)
    ;
    \end{tikzpicture}
\end{center}
We obtain a group homomorphism
\[ \la_\sE: \rK(X) \to \Pic(S), \]
called the \textbf{Donaldson morphism}, defined by sending a vector bundle $F$ on $X$ to the line bundle
\[ \la_\sE(F) = \det( \rR p_* (\sE \otimes q^*F)) \]
and extending linearly to $\rK(X)$. Moreover, if the complex $\rR p_*(\sE \otimes q^* F)$ can be locally on $S$ expressed as a 2-term complex of locally free sheaves
\[ \cdots \to 0 \to \sG_0 \xrightarrow{f} \sG_1 \to 0 \to \cdots \]
with $\rk(\sG_0) = \rk(\sG_1)$, then the local sections $f: \Oh_S \to \det(\sG_1) \otimes \det(\sG_0)^\vee$ glue to a global section of $\la_\sE(F)^\vee$.
We recall here the following straightforward application of the cohomology and base change theorem \cite[Tag 0A1K]{stacks-project}, which gives a criterion for the existence and non-vanishing of such a section. The proof of \cite[Lemma 4.1]{tajakka2022uhlenbeck} applies verbatim.
\begin{lem}\label{detsection}
    Let $X$ be a smooth, projective variety and $S$ an algebraic stack of finite type over $\C$. Let $\sE \in \Db(S \times X)$ be an $S$-perfect family of objects of class $v \in \Kn(X)$, and let $K \in \Db(X)$.
    \begin{enumerate}[(a)]
        \item If for all $\C$-points $s \in S$, we have $\Hh^i(X, \sE_s \otimes^\L K) = 0$ whenever $i \neq 0, 1$, and 
        \[ \chi(X, \sE_s \otimes^\L K) = \dim \Hh^0(X, \sE_s \otimes^\L K) - \dim \Hh^1(X, \sE_s \otimes^\L K) = 0, \]
        then the line bundle $\la_\sE([K])^\vee$ on $S$ has a canonical section $\de_{K}$.
        \item In addition, if for some $s_0 \in S$ we have 
        \[ \Hh^0(X, \sE_{s_0} \otimes^\L K) = \Hh^1(X, \sE_{s_0} \otimes^\L K) = 0, \]
        then the section $\de_{K}$ is nonzero at $s_0$.
    \end{enumerate}
\end{lem}
 
\subsection{A semiample determinantal line bundle}\label{section:determinantal-bundles-on-PT}

We now specialize to the stack $\cM_X(v)$. Let $\sE$ be the universal complex on $\cM_X(v) \times X$, so that we have the diagram
\begin{center}
    \begin{tikzpicture}
    \matrix (m) [matrix of math nodes, row sep=1em, column sep=1em]
    { & \sE &  \\
    & \cM_X(v) \times X & \\
    \cM_X(v) & & X \\};
    \path[dotted]
    (m-1-2) edge node[auto,swap] {$ $} (m-2-2)
    ;
    \path[->] 
    (m-2-2) edge node[auto,swap] {$ p $} (m-3-1)
    (m-2-2) edge node[auto] {$ q $} (m-3-3)
    ;
    \end{tikzpicture}
\end{center}

Consider the class
\begin{align}\label{eq:grothClass}
     w_{m,n,a} = - \chi(v(m) \cdot [\cO_D])[\cO_X(n)] + \chi(v(n) \cdot [\cO_D])[\cO_X(m)] \in \rK(X)
\end{align}
where $m, n, a > 0$ are integers, and $D \subset X$ is a divisor in $|\cO_X(a)|$. We define the line bundle
\[
    \cL_{m,n,a} \coloneqq \lambda_\cE(w_{m,n,a} \cdot [\cO_D]) \in \Pic(\cM_X(v))
\]
using the Donaldson morphism. Note that the class
\[  
    [\cO_D] = [\cO_X] - [\cO_X(-a)] \in \rK(X)
\]
does not depend on the chosen divisor $D$ in $|\cO_X(a)|$.

We next aim to show the following theorem:

\begin{thm}\label{thm:SemiAmple}
There are integers $m,n,a > 0$ such that the line bundle $\cL_{m,n,a}$ is semiample over $\cM_X(v)$.
\end{thm}

For this, we need some auxiliary results. The first one describes how the $\C$-points of $\cM_X(v)$ behave when restricted to surfaces of $X$.

\begin{lem}\label{lem:DerivedRestriction}
Let $D \subset X$ be a smooth surface of $X$, and let $E \in \cM_X(v)$ be a $\C$-point fitting in an exact triangle
\[
    F \to E \to T[-1]
\]
where $F \in \Coh(X)$ is $2$-Gieseker-semistable and $T \in \Coh(X)$ is $0$-dimensional. Then the derived restriction $E|_D^{\L}$ fits in an exact triangle
\[
    \cH^0(E|_D^{\L}) \to E|_D^{\L} \to \cH^1(E|_D^{\L})[-1]
\]
in $\Db(D)$, and $\cH^1(E|_D^{\L})$ is isomorphic to $T|_D$. 

If, moreover, $D$ does not meet the support of $T$, then $E|_D^{\L}$ is quasi-isomorphic to the restricted sheaf $F|_D$.
\end{lem}
\begin{proof}
By using the short exact sequence
\[
    0 \to F(-D) \to F \to F|_D \to 0
\]
one computes that $\cH^{0}(F|_D^{\L}) \cong F|_D$ and $\cH^{i}(F|_D^{\L}) = 0$ for $i \neq 0$. Similarly, one shows that $\cH^{i}(T|_D^{\L}) = 0$ for $i \neq -1,0$. Then the exact triangle
\[
    F|_D^{\L} \to E|_D^{\L} \to T|_D^{\L}[-1]
\]
leads to the following long exact sequence in sheaf cohomology
\[
   0 \to F|_D \to \cH^0(E|_D^{\L}) \to \cH^{-1}(T|_D^{\L}) \to 0 \to \cH^1(E|_D^{\L}) \to \cH^0(T|_D^{\L}) \to 0.
\]
We see that $\cH^i(E|_D^{\L}) = 0$ for $i \neq 0,1$, and so $E|_D^{\L}$ is supported only in degrees $0$ and $1$. Moreover, 
\[
    \cH^1(E|_D^{\L}) \cong  \cH^0(T|_D^{\L}) \cong \Coker(T(-D) \to T),
\]
hence $\cH^1(E|_D^{\L})$ is isomorphic to $T|_D$. In particular $\cH^1(E|_D^{\L}) = 0$ and $\cH^{-1}(T|_D^{\L}) = 0$ when $D$ avoids the support of $T$.
\end{proof}

\begin{lem}\label{lem:effRestriction}
There exists an integer $a > 0$ such that for every $\C$-point $E \in \cM_X(v)$, the restriction $E|_D^{\L}$ to a general surface $D \in |\cO_X(a)|$ is quasi-isomorphic to a Gieseker-semistable sheaf on $D$. Whenever $\cH^0(E)$ is Gieseker-stable, we can moreover ensure that the restriction $E|_D^{\L}$ is Gieseker-stable.
\end{lem}
\begin{proof}
Recall that for every $\C$-point $E \in \cM_X(v)$, the sheaf $\cH^0(E)$ is 2-Gieseker-semistable (or 2-Gieseker-stable). By the Mehta-Ramanthan type restriction theorems in \cite{pavel2022restriction}, there exists an integer $a(E) > 0$ such that for every $a \ge a(E)$, the restriction $E|_D^{\L} = \cH^0(E)|_D$ to a general divisor $D \in |\cO_X(a)|$ is a Gieseker-semistable (resp. Gieseker-stable) sheaf. A priori, the lower bound $a(E)$ on the degree of $D$ depends on the object $E$; however, we will show below that we can choose a uniform bound $a_0$ independently of $E$. 

Since the family
\[
    \frak{F} = \{\, \cH^0(E) \mid E \in \cM_X(v)(\C) \,\}
\]
is bounded (because $\cM_X(v)$ is quasi-compact), the sheaves $\cH^0(E)$ can be recovered as fibers of a coherent sheaf $\cF \in \Coh(S \times X)$ over a scheme $S$ of finite type over $\C$. Using a flattening stratification of $S$ and the fact that the notion of $2$-Gieseker-semistability is open in flat families, cf.~\cite[Lemma 2.2]{pavel2022restriction}, we may assume that $\cF$ is an $S$-flat family of $2$-Gieseker-semistable sheaves containing all sheaves $\cH^0(E) \in \frak{F}$. Then, for every closed point $s \in S$, there exists an integer $a(s) > 0$ such that for every $a \ge a(s)$ and general divisor $D \in |\cO_X(a)|$, there exists a nonempty open neighborhood $U_a \subset S$ of $s$ with the property that the restriction $$\cF|_{U_a \times D} \in \Coh(U_a \times D)$$ is an $U_a$-flat family of Gieseker-semistable sheaves on $D$. Since $S$ is quasi-compact, we can cover $S$ by finitely many open subsets $U_{a_1},\ldots,U_{a_r}$ of this type. Then any $a \ge a_0 = \max\, \{ a_i \mid i  = 1,\ldots, r\}$ satisfies the required condition.
\end{proof}

\begin{rmk}
When the rank and degree of $v$ are coprime, one can use effective restriction theorems, as established in \cite{flenner1984restrictions,langer2004semistable}, to obtain an explicit lower bound for $a$ in the previous result.
\end{rmk}

\begin{lem}\label{lem:n-regular}
For any integer $a > 0$, there exists an integer $n > 0$ such that for every $\C$-point $E \in \cM_X(v)$ and for every divisor $D \in |\cO_X(a)|$, the sheaf restriction $\cH^0(E)|_D$ is $n$-regular. 
\end{lem}
\begin{proof}
Since the family of coherent sheaves
\[
    \{\, \cH^0(E) \mid E \in \cM_X(v)(\C) \,\}
\]
is bounded, the same is true for the family of restricted sheaves
\[
     \{\, \cH^0(E)|_D \mid E \in \cM_X(v)(\C), D \in |\cO_X(a)| \,\}.
\]
Therefore there exists a sufficiently large $n > 0$ such that all these restrictions $\cH^0(E)|_D$ are $n$-regular, cf. \cite[Lemma 1.7.6]{HL}.    
\end{proof}

Another key ingredient is the following characterization of Gieseker-semistability on smooth surfaces $D \subset X$, which was shown in \cite[Theorem 7.2]{alvarez2007functorial}. Here, Gieseker-semistability on $D$ is defined with respect to the induced polarization $\cO_D(1) \coloneqq \cO_X(1)|_D$.

\begin{thm}\label{thm:Gss}
Let $P \in \Q[T]$ be a fixed polynomial of degree 2, and let $m > n > 0$ be two integers satisfying conditions \cite[(C:1)--(C:5)]{alvarez2007functorial} with respect to $P$. Then for any smooth, integral surface $D \subset X$, any $n$-regular pure sheaf $E$ of Hilbert polynomial $P$ on $(D,\cO_D(1))$ is Gieseker-semistable if and only if there is a map
\[ 
    U_1 \otimes \cO_D(-m) \xrightarrow{\theta} U_0 \otimes \cO_D(-n), 
\]
where $U_1$ and $U_0$ are non-zero vector spaces, such that the linear map 
\begin{align*}
	\Hom(\theta,E) : \Hom(U_0,\rH^0(E(n))) \to \Hom(U_1,\rH^0(E(m))) 
\end{align*}
is invertible, i.e. $\delta_\theta(E) \coloneqq \det \Hom(\theta,E) \neq 0$. 
\end{thm}

\begin{rmk}\label{rmk:criteriaGss}
a) Note that \cite[Theorem 7.2]{alvarez2007functorial} is more general, as it applies to any $n$-regular pure sheaf $E$ of Hilbert polynomial $P$ on $X$, not only to those with smooth integral support. We refer to \cite[p. 23]{alvarez2007functorial} for the precise formulation of conditions (C:1)--(C:5).

b) Once a map $\theta \in \Hom_D(U_1 \otimes \cO_D(-m), U_0 \otimes \cO_D(-n))$ as in the statement exists, then there is a non-empty Zariski-open neighborhood $V$ of $\theta$ such that $\Hom(\theta',E)$ is invertible for every $\theta' \in V$ (since such a property is open).

c) If we consider the complex
\[ 
    K: K_1 = U_1 \otimes \cO_D(-m) \xrightarrow{\theta} U_0 \otimes \cO_D(-n) = K_0, 
\]
then the condition that $\Hom(\theta,E)$ is invertible is equivalent to $\rR\Gamma(E \otimes^{\L} K^\vee) = 0$ whenever $E$ is $n$-regular (see \cite[Remark 7.4]{alvarez2007functorial}). To avoid any confusion, in the following we will view $K^\vee = [K_0^\vee \to K_1^\vee]$ as a complex with $K_0^\vee$ in degree $0$ and $K_1^\vee$ in degree $1$. 
\end{rmk}

\begin{proof}[Proof of \cref{thm:SemiAmple}]
For every $\C$-point $t \in \cM_X(v)$, we denote
\[
    E_t = \cE|^\LL_{\{t\} \times X}, \quad  F_t = \cH^0(E_t), \quad T_t = \cH^1(E_t).
\]

We choose integers $a, n, m > 0$ as follows:
\begin{enumerate}[(a)]
    \item\label{cond:a} First, we choose $a >0$ such that for every $\C$-point $t \in \cM_X(v)$, the restriction ${E_t}|_D^{\L}$ to a general smooth surface $D \in |\cO_X(a)|$ is quasi-isomorphic to a Gieseker-semistable sheaf on $D$.
    \item\label{cond:b} Second, we choose $n > 0$ satisfying conditions (C:1) and (C:2) from \cite{alvarez2007functorial} with respect to the Hilbert polynomial $P$ of $v - v(-a) \in \Knum(X)$, and such that for every $\C$-point $t \in \cM_X(v)$, the sheaf restriction $\cH^0(E_t)|_D$ to any surface $D \in |\cO_X(a)|$ is $n$-regular.
    \item\label{cond:c} Third, we choose $m > n$ such that conditions (C:3)--(C:5) from \cite{alvarez2007functorial} are satisfied (with respect to $P$ and $n$), and such that $2P(n) < P(m)$.
\end{enumerate}
Condition (a) and the second part of (b) can be ensured by \cref{lem:effRestriction} and \cref{lem:n-regular}, respectively. We also note that conditions (C:1)--(C:5) are always fulfilled for sufficiently large choices of $m > n > 0$, cf. \cite[Section 5.1]{alvarez2007functorial}. These conditions will allow us to apply \cref{thm:Gss}. 

Now pick a $\C$-point $t_0 \in \cM_X(v)$, which corresponds by definition to an object $E_{t_0} \in \Db(X)$ fitting in an exact triangle
\[
    F_{t_0} \to E_{t_0} \to T_{t_0}[-1]
\]
where $F_{t_0}$ is a $2$-Gieseker-semistable sheaf, and $T_{t_0}$ is a $0$-dimensional sheaf. Then for a general smooth surface $D \in |\cO_X(a)|$, the restriction $E_{t_0}|^\LL_D \cong F_{t_0}|_D$ is an $n$-regular Gieseker-semistable sheaf on $D$. Therefore, by \cref{thm:Gss}, there is a general map 
\[ 
    K: K_1 = U_1 \otimes \cO_D(-m) \xrightarrow{\theta} U_0 \otimes \cO_D(-n) = K_0
\]
such that $\H^i(E_{t_0}|_D^{\L} \otimes^{\L} K^\vee) = 0$ for all $i$. Here $U_1$ and $U_0$ are (non-zero) vector spaces satisfying
\[
    P(n)\dim(U_0) = P(m)\dim(U_1).
\]
Using the assumption $2P(n) < P(m)$ from \eqref{cond:c}, one easily obtains that $\dim(U_0) > \dim(U_1) + 1$.

Now consider the vector bundle $$K_0 \otimes K_1^\vee = (U_0 \otimes U_1^\vee) \otimes \cO_D(m-n),$$ which is globally generated over $D$, since so is $\cO_X(m-n)$. In this case, the following statement is well-known (see \cite[Section 4]{banica1991Smooth} for a proof): if $u \in \Hom(K^\vee_0, K^\vee_1)$ is a general map, then its $k$-th degeneracy locus $D_k(u) \subseteq D$, supported on the set $\{ x \in D \mid \rank(u(x)) \le k\}$, is either empty or is of pure codimension $(\rank(K^\vee_0)-k)(\rank(K^\vee_1)-k)$ in $D$. Letting $k = \rank(K^\vee_1) - 1$ and using the inequality $\dim(U_0) > \dim(U_1) + 1$, we deduce that $D_k(u)$ is empty. In other words, a general map
\[
    U_0^\vee \otimes \cO_D(n) \to U_1^\vee \otimes \cO_D(m)
\]
is surjective over $D$.

Therefore we may assume that $\theta^\vee$ is surjective over $D$. By using \cref{lem:sectionOnSurface} below, we obtain that $\H^i(E_t|_D^{\L} \otimes^{\L} K^\vee) = 0$ whenever $i \neq 0,1$, and for every $\C$-point $t \in \cM_X(v)$. This further yields by \cref{detsection} a global section $\delta_K$ of $$\lambda_{\cE_D}([K^\vee])^\vee \cong \lambda_{\cE_D}([K])$$ which is non-vanishing at $t_0$, where $\cE_D$ denotes the restriction of the universal complex $\cE$ to $\cM_X(v) \times D$. 

Next we relate $\lambda_{\cE_D}([K])$ and $\cL_{m,n,a}$. 
Note that
\begin{align*}
    [K] = [K_0] - [K_1] \in \rK(X).
\end{align*}
Letting $d_1 = \dim(U_1)$, $d_0 = \dim(U_0)$, and using $d_1 P(m) = d_0 P(n)$, we get
\begin{align*}
P(m)([K_0] - [K_1])= d_0 (P(n) [\cO_D(m)] - P(m)[\cO_D(n)]) = d_0 w_{m,n,a}|_D
\end{align*}
in $\rK(D)$. Hence
\begin{align}\label{eq:isoK}
	\cL_{m,n,a}^{d_0} = \lambda_{\cE}(w_{m,n,a} \cdot [\cO_D])^{d_0} \cong \lambda_{\cE_D}({w_{m,n,a}}|_D)^{d_0} \cong \lambda_{\cE_D}([K_0] - [K_1])^{P(m)}.
\end{align}
Therefore a power of $\cL_{m,n,a}$ is globally generated at $t_0 \in \cM_X(v)$. Since $\cM_X(v)$ is quasi-compact, we deduce that $\cL_{m,n,a}$ is semiample over $\cM_X(v)$.
\end{proof}

\begin{lem}\label{lem:sectionOnSurface}
Let $m,n,a > 0$ be integers as above, satisfying conditions \eqref{cond:a}--\eqref{cond:c}. Let $E \in \cM_X(v)$ be a $\C$-point fitting in
\[ F \to E \to T[-1], \]
let $D \in |\cO_X(a)|$ be a smooth integral surface, and consider a complex
\[ K: K_1 = U_1 \otimes \cO_D(-m) \xrightarrow{\theta} U_0 \otimes \cO_D(-n) = K_0 \]
as before.
\begin{enumerate}[(a)]
    \item If $\theta^\vee$ is surjective on the support of $T|_D$, then $\H^2(E|^{\L}_D \otimes^{\L} K^\vee) = 0$.
    \item If $D$ meets the support of $T$, then $\H^1(E|^{\L}_D \otimes^{\L} K^\vee) \neq 0$.
    \item If $D$ avoids the support of $T$, but $\Tor_1(F^\dd/F,\cO_D)$ has non-trivial $0$-dimensional torsion on $D$, then $\H^0(E|^{\L}_D \otimes^{\L} K^\vee) \neq 0$.
\end{enumerate}

\end{lem}
\begin{proof}
First we show that $\H^2(\cH^0(E|_D^{\L}) \otimes^{\L} K^\vee) = 0$. As we saw in the proof of \cref{lem:DerivedRestriction}, there is a short exact sequence
\begin{align}\label{eq:T0}
     0 \to F|_D \to \cH^0(E|_D^{\L}) \to \cH^{-1}(T|_D^{\L}) \to 0
\end{align}
with $\cH^{-1}(T|_D^{\L})$ a $0$-dimensional sheaf, and so we obtain a surjective map
\[
    \H^2(F|_D \otimes^{\L} K^\vee) \to  \H^2(\cH^0(E_t|_D^{\L}) \otimes^{\L} K^\vee) \to 0.
\] 
To compute $\H^2(F|_D \otimes^{\L} K^\vee)$ we will use the exact triangle
\[
    F|_D \otimes^{\L} K_0^\vee \to F|_D \otimes^{\L} K^\vee \to F|_D \otimes^{\L} K_1^\vee[-1].
\]
As $K_0$ and $K_1$ are locally free we have $F|_D \otimes^{\L} K_0^\vee = F|_D \otimes K_0^\vee$ and $F|_D \otimes^{\L} K_1^\vee = F|_D \otimes K_1^\vee$. 
Moreover, since $F|_D$ is $n$-regular by our assumptions on $n$, we know that
\begin{align*}
     \rH^i(F|_D \otimes^{\L} K_0^\vee) = \rH^i(F|_D \otimes^{\L} K_1^\vee) = 0 \quad \text{for }i \ge 1.
\end{align*}
Hence the long exact sequence in cohomology induced by the above triangle shows that $\H^2(F|_D \otimes^{\L} K^\vee) = 0$, from which one gets the statement about $\cH^0(E|_D^{\L})$.

Now consider the exact triangle
\[
    \cH^0(E|_D^{\L}) \to E|_D^{\L} \to T|_D[-1]
\]
given by \cref{lem:DerivedRestriction}, which leads to a long exact sequence in cohomology
\[
    \ldots \to \H^1(\cH^0(E|_D^{\L}) \otimes^{\L} K^\vee) \to \H^1(E|_D^{\L} \otimes^{\L} K^\vee) \to \H^0(T|_D \otimes^{\L} K^\vee) \to \ldots.
\]
From this one gets the following maps 
\begin{align}\label{eq:h12}
    \H^2(E|_D^{\L} \otimes^{\L} K^\vee) \cong \H^1(T|_D \otimes^{\L} K^\vee), \quad \H^1(E|_D^{\L} \otimes^{\L} K^\vee) \twoheadrightarrow \H^0(T|_D \otimes^{\L} K^\vee).
\end{align}
Using the spectral sequence
\[
    E_2^{p,q} = \rH^p(\cH^q(T|_D \otimes^{\L} K^\vee)) \Rightarrow \H^{p+q}(T|_D \otimes^{\L} K^\vee)
\]
and the fact that $T|_D$ is a $0$-dimensional sheaf, one obtains
\begin{align*}
    \H^1(T|_D \otimes^{\L} K^\vee) ={}& \rH^0(\cH^1(T|_D \otimes^{\L} K^\vee)) \\
    ={}& \rH^0(\Coker(T|_D \otimes K_0^\vee \xrightarrow{\id \otimes \theta^\vee} T|_D \otimes K_1^\vee)),
\end{align*}
and 
\begin{align*}
    \H^0(T|_D \otimes^{\L} K^\vee) ={}& \rH^0(\cH^0(T|_D \otimes^{\L} K^\vee)) \\
    ={}& \rH^0(\Ker(T|_D \otimes K_0^\vee \xrightarrow{\id \otimes \theta^\vee} T|_D \otimes K_1^\vee)).
\end{align*}
Using \eqref{eq:h12} we easily deduce (a) and (b).

It remains to show (c). Let $Q \coloneqq F^\dd/F$ and consider the short exact sequence
\[ 0 \to F \to F^\dd \to Q \to 0, \]
 which induces an exact triangle
    \[ F|^\LL_D \to F^\dd|^\LL_D \to Q|^\LL_D. \]
Since $F^\dd|^\LL_D \cong F^\dd|_D$ is a torsion-free sheaf, we have $\sH^{-1}(F^\dd|^\LL_D) = 0$, and so the long exact sequence of cohomology sheaves gives an injection
    \[ \sH^{-1}(Q|^\LL_D) \hookrightarrow F|_D = \sH^0(E|^\LL_D). \]
The last equality holds by \eqref{eq:T0}, since $D$ avoids the support of $T$. Now consider the exact triangle
\[
    Q(-a) \to Q \to Q|^\LL_D,
\]
which induces an exact sequence on cohomology sheaves
\[
    0 \to \sH^{-1}(Q|^\LL_D) \to Q(-a) \to Q \to \sH^{0}(Q|^\LL_D) \to 0.
\]
Therefore $\sH^{-1}(Q|^\LL_D) = \Tor_1(Q,\cO_D)$, whose $0$-dimensional torsion $Q'$ is non-trivial by assumption. As above, we obtain
\[
    \H^0(Q'\otimes^{\L} K^\vee) = \rH^0(\Ker(Q' \otimes K_0^\vee \xrightarrow{\id \otimes \theta^\vee} Q' \otimes K_1^\vee)) \neq 0,
\]
and thus
\[
    0 \neq \H^0(Q'\otimes^{\L} K^\vee) \hookrightarrow \H^0(\sH^{-1}(Q|^\LL_D)\otimes^{\L} K^\vee) \hookrightarrow \H^0(\sH^0(E|^\LL_D)\otimes^{\L} K^\vee).
\]
Taking the long exact sequence associated to the triangle
\[
    \cH^0(E|_D^{\L})\otimes^{\L} K^\vee \to E|_D^{\L}\otimes^{\L} K^\vee \to T|_D[-1]\otimes^{\L} K^\vee,
\]
gives an inclusion
\[
    0 \neq \H^0(\cH^0(E|_D^{\L})\otimes^{\L} K^\vee) \hookrightarrow \H^0(E|_D^{\L}\otimes^{\L} K^\vee),
\]
showing (c).
\end{proof}

\subsection{A separation result}\label{subsect:SeparationResult}
We next analyze the extent to which the line bundle $\cL_{m,n,a}$ separates points in $\cM_X(v)$. The following notion (introduced in a more general context in \cite[5.10]{grothendieck1965elements}) will be used to state our separation result, \cref{prop:L1fiber}.

\begin{defprop}\label{def:closure}
Let $F \in \Coh(X)$ be a torsion-free sheaf. The \textit{2-closure} $F^{[2]}$ of $F$ is the minimal coherent sheaf $F \subset F' \subset F^\dd$ which has homological dimension at most $1$. In particular, $F$ and $F^{[2]}$ are isomorphic outside finitely many points.
\end{defprop}
\begin{proof}
If $Q\coloneqq F^{\vee \vee}/F$ is zero, then $F^{[2]} = F^\dd$ since $F^\dd$ has homological dimension at most $1$ by the Auslander-Buchsbaum formula. Otherwise, let $Z_F$ be the maximal $0$-dimensional subsheaf of $Q$, and denote by $F'$ the kernel of the composition $F^{\vee \vee} \twoheadrightarrow Q \twoheadrightarrow Q/Z_F$. From the short exact sequence
\[
    0 \to F' \to F^{\vee \vee} \to Q/Z_F \to 0
\]
we obtain by \cite[Tag 00LX]{stacks-project} that
\[
    \depth_x(F') \ge \min\, \{ \depth_x(F^{\vee \vee}), \depth_x (Q/Z_F) + 1\}
\]
for every $x \in X$. As $F^{\vee \vee}$ is reflexive, $\depth_x(F^{\vee \vee}) \ge 2$ whenever $\dim(\cO_{X,x}) \ge 2$. Also, if $\dim(\cO_{X,x}) = 3$, then $\depth_x(Q/Z_F) \ge 1$ since $Q/Z_F$ is pure of dimension 1. Therefore $\depth_x(F') \ge 2$ for every closed point $x \in X$. By using again the Auslander-Buchsbaum formula we deduce that $F'$ has homological dimension at most 1. By construction, $F'$ is the minimal sheaf lying between $F$ and $F^\dd$ with this property; hence $F^{[2]} = F'$ in this case.
\end{proof}

Let $F$ be a $2$-Gieseker-semistable sheaf on $X$. Then $F^{[2]}$ is also $2$-Gieseker-semistable, since it differs from $F$ at only finitely many points. Also, one can easily see that given two Jordan-H\"older filtrations $F_\bullet$ and $F'_\bullet$, there is an isomorphism between $\gr(F_\bullet)^{[2]}$ and $\gr(F'_\bullet)^{[2]}$. In the following, we denote by $\gr(F)^{[2]}$ the 2-closure of the graded module corresponding to any Jordan-H\"older filtration of $F$.

\begin{lem}\label{lem:bound2Closures}
The family of $2$-Gieseker-semistable coherent sheaves
\[
    \{ \, \gr(\cH^0(E))^{[2]} \mid E \in \cM_X(v)(\C) \, \}
\]
is bounded.    
\end{lem}
\begin{proof}
For any $\C$-point $E \in \cM_X(v)$, using the additivity of the Hilbert polynomial in short exact sequences we obtain
\[
    \rP_1(\gr(\cH^0(E))^{[2]}) = \rP_1(\cH^0(E)), \quad a_3(\gr(\cH^0(E))^{[2]}) \ge a_3(\cH^0(E)),
\]
where $\rP_1(-)$ and $a_3(-)$ are defined as in \eqref{eq:hilbPol}. Since the family of sheaves $\cH^0(E)$ with $E \in \cM_X(v)$ is bounded, the family in the statement is also bounded by \cite[Theorem 4.8]{maruyama81on}. 
\end{proof}

\begin{cor}\label{cor:finiteRes}
Let $\iota : X \hookrightarrow \P^r$ be the closed embedding given by $\cO_X(1)$. Then there exists an integer $n > 0$ such that for every $\C$-point $E \in \cM_X(v)$, the sheaf $\iota_*\gr(\cH^0(E))^{[2]}$ admits a minimal locally free resolution of the form
\[
    0 \to F_{r-2} \to \ldots \to F_0 \to \iota_*\gr(\cH^0(E))^{[2]} \to 0,
\]
where $F_i = \bigoplus_j \cO_{\P^r}(- a_{i,j})^{\beta_{i,j}}$ with $a_{i,j} \ge 0$,  and
\[
    n \ge \sup \{ \, a_{i,j} - i \mid i \ge 0,\, j \ge 0 \, \}.
\]
\end{cor}
\begin{proof}
By \cref{lem:bound2Closures}, there exists an integer $n > 0$ such that every coherent sheaf $\gr(\cH^0(E))^{[2]}$ with $E \in \cM_X(v)(\C)$ is $n$-regular. The existence of such a minimal resolution as in the statement is then a consequence of \cite[Proposition 4.16]{EisenbudSyzygyies}. Note that in our case we can always choose a resolution of length at most $r-2$. Indeed, by definition, the $2$-closure $\gr(\cH^0(E))^{[2]}$ has homological dimension at most $1$ on $X$, and thus $\iota_* \gr(\cH^0(E))^{[2]}$ has homological dimension at most $r-2$ on $\P^r$.
\end{proof}

\begin{lem}\label{lem:restrictionSequiv} 
There exists an integer $a > 0$ satisfying: for every two $\C$-points $E_1, E_2 \in \cM_X(v)$ such that the $2$-Gieseker-semistable sheaves $\cH^0(E_1), \cH^0(E_2)$ are not $S$-equivalent, their restrictions $E_1|^\LL_D$ and $E_2|^\LL_D$ to a general surface $D \in |\cO_X(a)|$ are Gieseker-semistable sheaves that are not $S$-equivalent on $D$.
\end{lem}
\begin{proof}
Let $a > 0$ be an integer satisfying the condition of \cref{lem:effRestriction}.
We set $$F_1 \coloneqq \cH^0(E_1)^{[2]}, \quad F_2 \coloneqq \cH^0(E_2)^{[2]}.$$ Since $F_{1}$ and $F_{2}$ are 2-Gieseker-semistable, they admit some Jordan-H\"older filtrations $F_{1}^\bullet$ and $F_{2}^\bullet$, respectively, whose factors are $2$-Gieseker-stable. 

Note that $F_1$ and $F_2$ have homological dimension at most 1. By using \cite[Tag 065S]{stacks-project}, which characterizes homological dimension in short exact sequences, we may assume that the filtrations $F_{1}^\bullet, F_{2}^\bullet$ were chosen such that $G_1$ and $G_2$ also have homological dimension at most 1. Hence
\[
    G_{1} \cong \gr(\cH^0(E_1))^{[2]},\quad G_{2} \cong \gr(\cH^0(E_2))^{[2]},
\]
which, by hypothesis, are not isomorphic to each other.

By \cref{lem:effRestriction}, for a general surface $D \in |\cO_X(a)|$, the restrictions $F_{1}|_D \cong E_1|^\LL_D$ and $F_{2}|_D \cong E_2|^\LL_D$ are Gieseker-semistable, and $G_{1}|_D$ and $G_{2}|_D$ are Gieseker-polystable. In particular the restrictions $F_{1}^\bullet|_D$ and $F_{2}^\bullet|_D$ yield Jordan-Hölder filtrations of $F_{1}|_D$ and $F_{2}|_D$, respectively, with respect to the notion of Gieseker-semistability on $D$, implying that
\[
    G_{1}|_D = \gr(F_{1}^\bullet|_D)\quad \text{and}\quad  G_{2}|_D = \gr(F_{2}^\bullet|_D).
\]

Now consider the short exact sequence
\[
    0 \to G_{2}(-D) \to G_{2} \to G_{2}|_D \to 0.
\]
Applying $\Hom_X(G_{1},-)$ to the above sequence, we get the long exact sequence
\begin{align*}
     0 \to{}& \Hom_X(G_{1},G_{2}(-D)) \to \Hom_X(G_{1},G_{2}) \to \Hom_X(G_{1}|_D,G_{2}|_D) \to \\
     \to{}& \Ext^1_X(G_{1},G_{2}(-D)) \to \ldots.
\end{align*}
Since $G_1$ and $G_2$ are $2$-Gieseker-polystable sheaves satisfying $\rp_{1}(G_1) = \rp_{1}(G_2)$, we get
\[
    \rp_{1}(G_1,m) > \rp_{1}(G_2(-D),m) = \rp_{1}(G_2,m-a) \quad \text{for }m \gg 0,
\]
and thus $\Hom_X(G_{1},G_{2}(-D)) = 0$.

Next we show that $\Ext^1_X(G_1,G_2(-D)) = 0$. Let $\iota : X \hookrightarrow \P^r$ be the closed embedding given by $\cO_X(1)$. By \cref{cor:finiteRes}, there exists an integer $n > 0$ such that every member of the bounded family
\[
    \{ \, \gr(\cH^0(E))^{[2]} \mid E \in \cM_X(v)(\C) \, \}
\]
is $n$-regular, and each $\iota_*\gr(\cH^0(E))^{[2]}$ admits a minimal locally free resolution of the form
\[
    0 \to F_{r-2} \to \ldots \to F_0 \to \iota_*\gr(\cH^0(E))^{[2]} \to 0
\]
such that  $F_i = \bigoplus_j \cO_{\P^r}(- a_{i,j})^{\beta_{i,j}}$ with $a_{i,j} \ge 0$ and
\[
    n \ge \sup \{ \, a_{i,j} - i \mid i \ge 0,\, j \ge 0 \, \}.
\]
By increasing $a$ if necessary, we may also assume that 
\begin{equation}\label{eq:aij}
    a_{i,j}  - i + a - 2 \ge n \quad \text{for all } i \ge 0,\, j \ge 0.
\end{equation}
This last condition will be used shortly.

In particular, we get a resolution as above for $G_2 = \gr(\cH^0(E_2))^{[2]}$, which can be split into short exact sequences
\begin{align*}
     &0 \to K_1 \to F_0 \to \iota_* G_2 \to 0, \quad 0\to K_2 \to F_1 \to K_1 \to 0, \quad \ldots \\
     &0 \to F_{r-2} \to F_{r-3} \to K_{r-3} \to 0.
\end{align*}
We set $K_0 \coloneqq \iota_* G_2$ and $K_{r-2} \coloneqq F_{r-2}$. Then every short exact sequence
\[
    0 \to K_{i+1} \to F_i \to K_i \to 0
\]
induces a complex
\begin{equation}\label{eq:exti}
     \Ext_{\P^r}^{i+1}(\iota_*G_1,F_i(-a)) \to \Ext_{\P^r}^{i+1}(\iota_*G_1,K_i(-a))  \to \Ext_{\P^r}^{i+2}(\iota_* G_1,K_{i+1}(-a)) 
\end{equation}
which is exact in the middle.
Using Serre Duality, we get
\begin{align*}
      \Ext^{i+1}_{\P^r}(\iota_* G_1,F_i(-a)) \cong{}& \Ext_{\P^r}^{r - i-1}(F_i(-a),\iota_*G_1(-r-1))^\vee \\ 
    \cong{}& \bigoplus_j \Ext_{\P^r}^{r-i-1}(\cO_{\P^r}, \iota_*G_1(a_{i,j} + a - r - 1))^{\oplus \beta_{i,j}} \\
    \cong{}& \bigoplus_j \rH^{r-i-1}(X, G_1(a_{i,j} + a - r - 1))^{\oplus \beta_{i,j}}.
\end{align*}
From the inequality \eqref{eq:aij} and the $n$-regularity of $G_1$, we obtain that the last group above is trivial for $i \le r-2$. 

Starting from the sequence \eqref{eq:exti} at $i = r-3$ and using that
\[
    \Ext^{i+1}_{\P^r}(\iota_* G_1,F_i(-a)) = 0,
\]
a decreasing induction on $i$ shows that
\[
    \Ext_{\P^r}^{1}(\iota_* G_1,\iota_* G_2(-a)) = 0.
\]
Therefore $\Ext^1_X(G_1,G_2(-D)) = 0$, implying that
\[
    \Hom_X(G_{1},G_{2}) \to \Hom_X(G_{1}|_D,G_{2}|_D)
\]
is an isomorphism. The same holds with $G_{1}$ and $G_{2}$ interchanged, from which we deduce that the restrictions $G_{1}|_D$ and $G_{2}|_D$ are still non-isomorphic. In conclusion,  $E_1|^\LL_D$ and $E_2|^\LL_D$ are Gieseker-semistable sheaves that are not $S$-equivalent on $D$.   
\end{proof}

\begin{lem}\label{lem:avoidAssPoints}
There exists an integer $a > 0$ such that a general divisor $D \in |\cO_X(a)|$ avoids the $1$-dimensional associated points of all sheaves $\cH^0(E)^\dd/\cH^0(E)$ with $E \in \cM_X(v)$.
\end{lem}
\begin{proof}
Since the family of sheaves $\cH^0(E)$ with $E \in \cM_X(v)$ is bounded, then so is the family of quotients $\cH^0(E)^\dd/\cH^0(E)$. Now the result is a consequence of \cite[Lemma 3.8]{pavel2021moduli}.
\end{proof}

In the following, we fix an integer $a > 0$ which satisfies the conclusions of Lemmas \ref{lem:restrictionSequiv} and \ref{lem:avoidAssPoints}, and moreover condition \eqref{cond:a}. Once $a$ is fixed, we choose integers $m, n > 0$ satisfying conditions  \eqref{cond:b} and \eqref{cond:c}, as in the proof of \cref{thm:SemiAmple}.

Then the line bundle $\cL_{m,n,a}$ is semiample over $\cM_X(v)$, cf. \cref{thm:SemiAmple}. Let $S$ be a smooth, proper, connected curve and let $\sE_S \in \Db(S \times X)$ be an $S$-perfect complex corresponding to a map $S \to \cM_X(v)$ such that $\deg(\sL_{m,n,a}|_S)=0$. For each closed point $s \in S$, denote 
\[ E_s = {\sE_S}|^\LL_{\{s\} \times X}, \quad F_s = \sH^0(E_s), \quad T_s = \sH^1(E_s), \quad Q_s = F_s^{\vee \vee}/F_s. \]
We also denote by $\gr(F_s)^{[2]}$ the 2-closure of the graded module corresponding to any Jordan-H\"older filtration (with respect to the notion of 2-Gieseker-semistability) of $F_s$. Recall that $\gr(F_s)^{[2]}$ does not depend on the chosen Jordan-H\"older filtration.

\begin{prop}\label{prop:L1fiber}
Under the above hypotheses, the $2$-Gieseker-semistable sheaves $F_s$ with $s \in S$ lie in the same $S$-equivalence class in $\Coh_{3,1}(X)$, and so $\gr(F_s)^{[2]}$ does not vary with $s \in S$.
\end{prop}
\begin{proof}
Suppose, on the contrary, that there are two points $s_1, s_2 \in S$ such that $F_{s_1}$ and $F_{s_2}$ are not $S$-equivalent. By \cref{lem:restrictionSequiv} and our choice of the integer $a >0$, there exists a general smooth divisor $D \in |\cO_X(a)|$ such that $E_{s_1}|^\LL_D \cong F_{s_1}|_D$, $E_{s_2}|^\LL_D \cong F_{s_2}|_D$ are Gieseker-semistable sheaves that are not $S$-equivalent on $D$. We can choose a general divisor $D \in |\cO_X(a)|$ that avoids the 1-dimensional associated points of $Q_s$ for every $s \in S$, cf. \cref{lem:avoidAssPoints}. Therefore $\Tor_1(\cQ_s,\cO_D)$ is either zero or 0-dimensional on $D$.

As in the proof of \cref{thm:SemiAmple}, there is a complex
\[ 
    K: K_1 = U_1 \otimes \cO_D(-m) \xrightarrow{\theta} U_0 \otimes \cO_D(-n) = K_0
\]
such that $\H^i(E_{s_1}|^\LL_D \otimes^{\LL} K^\vee) = 0$ for all $i$, and $\H^i(E_{s}|^\LL_D \otimes^{\LL} K^\vee) = 0$ for all $i \neq 0,1$ and $s \in S$. Furthermore, if we denote by $\cE_D$ the (derived) restriction of $\cE_S$ to $S \times D$, then there exists a section $\delta_K$ of $\lambda_{\cE_D}([K^\vee])^\vee \cong \lambda_{\cE_D}([K])$ over $S$ non-vanishing at $s_1$. We have also seen in the proof of \cref{thm:SemiAmple} that
\[
    \lambda_{\cE_D}([K])^{\otimes k} \cong \cL_{m,n,a}|_S^{\otimes k'} 
\]
for some integers $k, k' >0$, and so 
\[
    \deg(\lambda_{\cE_D}([K])) = \deg(\cL_{m,n,a}|_S) = 0.
\]
This implies that $\delta_K$ is nowhere vanishing over the curve $S$; thus
$\H^i(\cE_{s}|^\LL_D \otimes^{\LL} K^\vee) = 0$ for all $i$ and $s \in S$. By \cref{lem:sectionOnSurface} we obtain that for every $s\in S$, the divisor $D$ avoids the support of $T_s$ and $\Tor_1(Q_s,\cO_D) = 0$. The last condition implies that $F_s|_D$ injects into the pure sheaf $F_s^{\dd}|_D$. Indeed, this follows from the exact sequence
\[
    0 \to \Tor_1(Q_s,\cO_D) \to F_s|_D \to F^\dd_s|_D \to Q_s|_D \to 0.
\]
In particular, each restriction $F_s|_D$ remains torsion-free over $D$. Moreover, by our choice of the integer $n >0$, the restrictions $F_s|_D$ with $s \in S$ are \textit{$n$-regular} pure sheaves of fixed Hilbert polynomial $P$ on $D$. Therefore $\cE_{s}|^\LL_D \cong F_s|_D$ is an $n$-regular pure sheaf on $D$ for all $s \in S$. By applying \cref{thm:Gss} we obtain that $\cE_{s}|^\LL_D$ is also Gieseker-semistable, showing that $\cE_D$ is quasi-isomorphic to an $S$-flat family of Gieseker-semistable sheaves on $D$. 

Now consider the moduli stack $\cM^\mathrm{Gss}_D(P)$ of Gieseker-semistable sheaves of Hilbert polynomial $P$ on $D$. This stack admits a good moduli space $$\pi : \cM^\mathrm{Gss}_D(P) \to \rM^\mathrm{Gss}_D(P),$$
see \cite[Example 7.28]{alper2019existence}. If we denote by $\cF$ the universal family of sheaves over $\cM^\mathrm{Gss}_D(P) \times D$, then the line bundle $\lambda_\cF([K])$ descends to an \textit{ample} line bundle $\lambda([K])$ on $\rM^\mathrm{Gss}_D(P)$. Indeed, $\lambda([K])$ coincides with line bundle $\lambda_U(P)$ associated to the pair $(U_0,U_1)$, constructed in \cite[Proposition 7.7]{alvarez2007functorial}.

Consider the composition
\[
    S \xrightarrow{f} \cM^\mathrm{Gss}_D(P) \xrightarrow{\pi} \rM^\mathrm{Gss}_D(P),
\]
where the first map corresponds to the $S$-flat family $\cE_D$. Letting $g = \pi \circ f : S \to \rM^\mathrm{Gss}_D(P)$, we obtain
\[
    g^*\lambda([K]) \cong f^*\lambda_\cF([K]) \cong \lambda_{\cE_D}([K]).
\]
Finally, since $E_{s_1}|^\LL_D$, $E_{s_2}|^\LL_D$ are not $S$-equivalent as Gieseker-semistable sheaves on $D$, the images of $s_1$ and $s_2$ through $g$ are distinct points in $\rM^\mathrm{Gss}_D(P)$. Indeed, this follows since the geometric points of $\rM^\mathrm{Gss}_D(P)$ correspond to $S$-equivalence classes of Gieseker-semistable sheaves on $D$, cf. \cite[Theorem 4.3.4]{HL}. Therefore there exists a global section of some power of the ample line bundle $\lambda([K])$ separating $g(s_1)$ and $g(s_2)$. The pullback of this section via $g$ produces a global section of some power of $\lambda_{\cE_D}([K])$ separating $s_1$ and $s_2$ over $S$. This yields a contradiction since we have seen above that $\deg(\lambda_{\cE_D}([K])) = 0$.
\end{proof}

\section{PT-semistable objects}\label{sect:PT-semistable objects}

In this section, we recall the definition and basic properties of PT-stability conditions. They are examples of polynomial stability conditions defined in \cite{bayer-polynomial} as a generalization of Bridgeland stability conditions in order to understand the large volume limit of Bridgeland stability, as well as to study the relation between the DT/PT invariants. We largely follow \cite{lo-PT1} and \cite{lo-PT2} in our presentation, except that we use a slightly different convention for the category of perverse sheaves (see \cref{rmk:convention0}).

As before, $X$ is a smooth, projective, connected threefold over $\C$, and fix a very ample divisor $H \in |\cO_X(1)|$. In the following, we let $\phi(z) \in (0, \pi]$ denote the phase of a complex number $z$ in the extended upper half-plane
\[ \Hh = \{\, z \in \C \mid \im z > 0 \,\} \cup \R_{<0}. \]

\begin{defn}\label{defn:PTstab}
    A \textbf{PT-stability condition} on $X$ consists of the data of the heart $\sA^p(X)$ together with a group homomorphism $Z: \Kn(X) \to \C[m]$, called the \emph{central charge}, of the form
    \[ Z(E)(m) = \sum_{d=0}^3 \rho_d \left(\int_X H^d \cdot \ch(E) \cdot U\right) m^d, \]
    where
    \begin{enumerate}[(a)]
        \item the $\rho_d \in \C^*$ are nonzero complex numbers such that $-\rho_0, -\rho_1, \rho_2, \rho_3 \in \Hh$, and whose phases satisfy
        \[ \phi(\rho_2) > \phi(-\rho_0) > \phi(\rho_3) > \phi(-\rho_1). \]
        \item $U = 1 + U_1 + U_2 + U_3 \in A^*(X)$ is a class with $U_i \in A^i(X)$ for $i = 1, 2, 3$.
    \end{enumerate}
\end{defn}
The configuration of the complex numbers $\rho_i$ is compatible with the heart $\sA^p(X)$ in the sense that for any nonzero $E \in \sA^p(X)$, we have $Z(E)(m) \in \Hh$ for $m \gg 0$. This allows us to define a notion of stability on $\sA^p(X)$: an object $E \in \sA^p(X)$ is called $Z$-\textbf{stable} (resp. $Z$-\textbf{semistable}) if for every proper nonzero subobject $F \subs E$, we have 
\[ \phi(Z(F)(m)) < \phi(Z(E)(m) \quad (\mathrm{resp.} \quad \phi(Z(F)(m)) \le \phi(Z(E)(m)) \quad \mathrm{for} \quad m \gg 0.  \]

\begin{rmk}\label{rmk:convention}
As we noted in \cref{rmk:convention0}, our definition of the heart $\sA^p(X)$ differs from that in the literature by a shift. To account for this, our definition of the charge $Z$ also differs in that $-\rho_0, -\rho_1, \rho_2, \rho_3$, rather than $\rho_0, \rho_1, -\rho_2, -\rho_3$, are in $\Hh$.  This will let us view the moduli of PT-semistable objects as an enlargement of the moduli of $\mu$-stable reflexive sheaves without having to perform a shift.
\end{rmk} 

In \cite{PT}, Pandharipande and Thomas define a \emph{stable pair} on $X$ to be a map of the form
\[ \Oh_X \xrightarrow{s} F, \]
where $F$ is a sheaf of pure dimension 1 and the cokernel of $s$ is $0$-dimensional. In \cite[Proposition 6.1.1]{bayer-polynomial}, Bayer shows that for any PT-stability condition, the stable objects in $\sA^p(X)$ with numerical invariants $\ch = (1, 0, -\be, -n)$ and trivial determinant coincide precisely with these stable pairs. The following partial characterization of PT-semistable objects generalizes this fact to higher rank.

\begin{prop}[{\cite[Lemma 3.3]{lo-PT1}, \cite[Proposition 2.24]{lo-PT2}}]\label{prop:PT-partial-characterization}
    If $v \in \Kn(X)$ is a class of rank $\rk(v) > 0$, then any PT-semistable object $E \in \sA^p(X)$ of class $v$ satisfies the following conditions:
    \begin{enumerate}[(a)]
        \item $\sH^0(E)$ is $2$-Gieseker-semistable,
        \item $\sH^1(E)$ is $0$-dimensional,
        \item $\Hom_{\Db(X)}(T[-1], E) = 0$ for any $0$-dimensional sheaf $T$.
    \end{enumerate}
    In particular $\sH^0(E)$ is also $\mu$-semistable. If moreover $\rk(v)$ and $H^2 \cdot \ch_1(v)$ are coprime, then any object $E$ of class $v$ in $\sA^p(X)$ satisfying these conditions is PT-stable, $\sH^0(E)$ is $\mu$-stable, and there are no strictly semistable objects.
\end{prop}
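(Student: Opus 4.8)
The plan is to prove the statement of \cref{prop:PT-partial-characterization} in two parts: first establishing the three necessary conditions (i)--(iii) for an arbitrary PT-semistable object, and then proving the converse and the stability upgrade under the coprimality hypothesis. For the necessary conditions, I would work directly with the tilted heart $\sA^p(X)$ and exploit the definition of the phase ordering in the central charge $Z$. The key technical tool is the behaviour of the phases $\phi(Z(F)(m))$ as $m \to \infty$: since $Z$ is polynomial in $m$, the leading-order term governs the phase, and the prescribed inequalities $\phi(\rho_2) > \phi(-\rho_0) > \phi(\rho_3) > \phi(-\rho_1)$ encode exactly which ``dimension stratum'' of a sheaf or complex dominates the phase asymptotically. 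I would compute the leading term of $Z$ on various test objects (a torsion subsheaf, a zero-dimensional sheaf, the cohomology sheaves $\sH^0(E)$ and $\sH^1(E)$) and read off the phase comparison.

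\textbf{Necessary conditions.} For (ii), I would observe that $\sH^1(E)$ is by definition of the heart a sheaf of dimension $\le 1$; to rule out a one-dimensional part, I would consider the maximal subsheaf $T \subs \sH^1(E)$ of dimension $\le 1$ and test the subobject $T[-1] \subs E$ (which lies in $\sA^p(X)$) against semistability, showing that its phase would violate the bound coming from $\phi(-\rho_1)$ unless $T$ is $0$-dimensional. For (iii), a nonzero map $T[-1] \to E$ with $T$ zero-dimensional would exhibit $T[-1]$ as a subobject (after checking injectivity in the heart), and its phase $\phi(-\rho_1)$ is strictly the largest, contradicting semistability; so all such maps vanish. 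For (i), given conditions (ii)--(iii), the object $E$ is essentially governed by the sheaf $F = \sH^0(E)$, and I would translate $Z$-semistability of $E$ into semistability of $F$ in $\Coh_{3,1}(X)$ by comparing reduced Hilbert-type polynomials: a destabilizing subsheaf of $F$ would lift to a destabilizing subobject of $E$ in $\sA^p(X)$. Torsion-freeness of $\sH^0(E)$ follows because a torsion subsheaf (of dimension $\le 1$) would again destabilize via the phase ordering. The implication that $\sH^0(E)$ is $\mu$-semistable is then the standard fact that semistability in $\Coh_{3,1}(X)$ (i.e.\ with respect to the top two coefficients of the Hilbert polynomial) implies $\mu$-semistability.

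\textbf{The coprime case.} For the converse under coprimality, I would take an object $E$ satisfying (i)--(iii) and show it is PT-semistable by reversing the above comparisons: any subobject $F \subs E$ in $\sA^p(X)$ decomposes via its cohomology sheaves, and conditions (ii)--(iii) force $F$ to be controlled by a subsheaf of $\sH^0(E)$, whose phase is bounded by the semistability of $\sH^0(E)$ in $\Coh_{3,1}(X)$. The coprimality of $\rk(v)$ and $H^2 \cdot \ch_1(v)$ then rules out equality of reduced invariants for any proper nonzero subsheaf, upgrading $\mu$-semistability of $\sH^0(E)$ to $\mu$-stability and PT-semistability of $E$ to PT-stability; the same numerical coprimality shows no strictly semistable objects of class $v$ can exist, since a strictly semistable $E$ would produce a proper subsheaf of $\sH^0(E)$ with matching slope, contradicting coprimality.

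\textbf{Main obstacle.} I expect the delicate part to be the precise bookkeeping of phases in the asymptotic regime $m \gg 0$: one must carefully identify, for each test object, which power of $m$ dominates $Z(E)(m)$ and hence which $\rho_d$ determines the phase, and then invoke the exact chain of strict inequalities in \cref{defn:PTstab} in the correct direction. Because the heart is a tilt with cohomology concentrated in degrees $0$ and $1$, subobjects in $\sA^p(X)$ do not correspond naively to subsheaves, so verifying that candidate destabilizers genuinely lie in the heart and are genuine subobjects (rather than merely subsheaves of some cohomology) requires the snake-lemma/long-exact-sequence analysis of the tilting torsion pair. This is where the bulk of the care is needed; the numerical coprimality argument at the end is comparatively routine once the semistability comparison is in place.
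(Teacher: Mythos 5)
Before assessing the mathematics: the paper itself does not prove \cref{prop:PT-partial-characterization}. It is imported verbatim from Lo's work (the cited Lemma~3.3 of \emph{lo-PT1} and Proposition~2.24 of \emph{lo-PT2}), so there is no in-paper proof to compare against. Judged on its own terms, your overall strategy is the right one and matches the standard argument: the limiting phase of $Z(\cdot)(m)$ is read off from the dominant power of $m$, so that $2$-dimensional sheaves, shifted $0$-dimensional sheaves, positive-rank sheaves, and shifted pure $1$-dimensional sheaves have limiting phases $\phi(\rho_2) > \phi(-\rho_0) > \phi(\rho_3) > \phi(-\rho_1)$ respectively; one then tests the (de)stabilizers produced by the tilting torsion pair, and the coprimality hypothesis rigidifies the numerical comparison. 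The same asymptotics are used in the paper's own \cref{subobjposrank}, so this is consistent with the surrounding text.

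There is, however, one concrete error of direction in your treatment of (ii). A subobject of $E$ of the form $T'[-1]$ with $T'$ pure $1$-dimensional has limiting phase $\phi(-\rho_1)$, which is the \emph{smallest} of the four phases; such a subobject therefore satisfies $\phi(Z(T'[-1])(m)) \le \phi(Z(E)(m))$ for $m \gg 0$ and does not violate semistability, so it cannot be used to rule out a $1$-dimensional piece of $\sH^1(E)$. (Moreover, $T[-1]$ for a subsheaf $T \subs \sH^1(E)$ is not naturally a subobject of $E$: in the canonical sequence $0 \to \sH^0(E) \to E \to \sH^1(E)[-1] \to 0$ the shifted cohomology appears as a quotient.) The correct test is the quotient $E \twoheadrightarrow (\sH^1(E)/T_0)[-1]$, with $T_0$ the maximal $0$-dimensional subsheaf; if $\sH^1(E)/T_0 \neq 0$ this quotient is pure $1$-dimensional with limiting phase $\phi(-\rho_1) < \phi(\rho_3)$, which violates semistability from the quotient side. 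Your test for (iii) is correctly oriented, since there $\phi(-\rho_0) > \phi(\rho_3)$ makes $T[-1]$ a genuine destabilizing subobject. Finally, in the converse and in the ``no strictly semistable objects'' claim, the coprimality argument as you state it only handles subobjects $A$ with $0 < \rk(A) < \rk(E)$; you also need the full-rank case, where the quotient $E/A$ is either a $2$-dimensional sheaf or a shifted $0$-dimensional sheaf (both of limiting phase exceeding $\phi(\rho_3)$, hence harmless), and for phase-equality one observes that a full-rank phase-equal subobject would force $Z(E/A)$ to vanish, impossible for a nonzero object of the heart. These are repairable omissions, but as written the sketch of (ii) does not go through.
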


We note that for any PT-semistable object $E$ of positive rank, the torsion-free sheaf $\sH^0(E)$ has homological dimension at most $1$, cf. \cite[Example 5.5]{Lo2021PTstableQuotients}.

We will also need the following observation.
\begin{lem}\label{subobjposrank}
    Let $E \in \sA^p(X)$ be a PT-semistable object with respect to a charge $Z: \Kn(X) \to \C[m]$ as in Definition \ref{defn:PTstab}, and assume $\rk(E) > 0$. If $F \subs E$ is a subobject in $\sA^p(X)$ such that 
    \[ \phi(Z(F)(m)) = \phi(Z(E)(m)) \quad \text{for } m \gg 0, \]
    then $\rk(F) > 0$.
\end{lem}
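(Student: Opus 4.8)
The plan is to argue by contradiction, exploiting the strict ordering $\phi(\rho_2) > \phi(-\rho_0) > \phi(\rho_3) > \phi(-\rho_1)$ from \cref{defn:PTstab}: I will show that a nonzero subobject of rank $0$ has an asymptotic phase lying in $\{\phi(\rho_2), \phi(-\rho_0), \phi(-\rho_1)\}$, whereas the positive-rank object $E$ has asymptotic phase $\phi(\rho_3)$, so the equal-phase hypothesis cannot hold. The first step is to record how the asymptotic phase is read off from $Z$. For any nonzero $G \in \sA^p(X)$ the compatibility of the configuration with the heart gives $Z(G)(m) \in \Hh$ for $m \gg 0$, so if $Z(G)(m) = c_k m^k + o(m^k)$ with leading coefficient $c_k \in \Hh$, then continuity of $\phi$ on $\Hh$ yields $\lim_{m \to \infty} \phi(Z(G)(m)) = \phi(c_k)$.

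Next I would pin down the leading term for $E$. Since $\sH^1(E)$ has dimension $\le 1$, it contributes $0$ to both $\ch_0$ and $\ch_1$, so $\rk(E) = \rk(\sH^0(E))$ and $\ch_1(E) = \ch_1(\sH^0(E))$. The $m^3$-coefficient of $Z(E)(m)$ is $\rho_3 H^3 \rk(E)$, and $H^3 \rk(E) > 0$ because $H$ is ample and $\rk(E) > 0$; hence $\lim_{m\to\infty}\phi(Z(E)(m)) = \phi(\rho_3)$.

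Now suppose toward a contradiction that $F \ne 0$ and $\rk(F) = 0$, and analyze its cohomology sheaves using the torsion pair defining $\sA^p(X)$. The sheaf $\sH^0(F)$ lies in the torsion-free class, so every coherent subsheaf of it has dimension $\ge 2$; being moreover of rank $0$, it is either zero or pure of dimension $2$, while $\sH^1(F)$ has dimension $\le 1$. This gives three cases. In case (i), $\sH^0(F)$ is pure of dimension $2$: its support is an effective nonzero surface class, so $H^2\ch_1(F) = H^2\ch_1(\sH^0(F)) > 0$, the leading ($m^2$) coefficient is $\rho_2 H^2\ch_1(F)$, and the limiting phase is $\phi(\rho_2)$. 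In case (ii), $\sH^0(F) = 0$ and $\sH^1(F)$ is one-dimensional, so $F \cong \sH^1(F)[-1]$ and $Z(F) = -Z(\sH^1(F))$; the leading ($m^1$) coefficient is $-\rho_1\,(H \cdot \ch_2(\sH^1(F)))$ with $H \cdot \ch_2(\sH^1(F)) > 0$, giving limiting phase $\phi(-\rho_1)$. In case (iii), $\sH^0(F) = 0$ and $\sH^1(F)$ is zero-dimensional, so the constant leading coefficient is $-\rho_0\,\ch_3(\sH^1(F))$ with $\ch_3(\sH^1(F)) > 0$, giving limiting phase $\phi(-\rho_0)$.

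In every case $\lim_{m\to\infty}\phi(Z(F)(m)) \in \{\phi(\rho_2), \phi(-\rho_0), \phi(-\rho_1)\}$, and by the strict chain $\phi(\rho_2) > \phi(-\rho_0) > \phi(\rho_3) > \phi(-\rho_1)$ none of these equals $\phi(\rho_3)$. Passing to the limit $m \to \infty$ in the hypothesis $\phi(Z(F)(m)) = \phi(Z(E)(m))$ forces the two limits to coincide, a contradiction; hence $\rk(F) > 0$. I expect the only real subtlety to be the organizational one: correctly extracting, from the torsion pair defining $\sA^p(X)$, the finite list of possible ``shapes'' of a nonzero rank-$0$ object so that its leading coefficient can be identified, together with the positivity inputs ($H^3 > 0$, intersection of $H$ with effective curve and surface classes positive, positive length of a $0$-dimensional sheaf) furnished by ampleness of $H$. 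The conceptual mechanism — that positive rank is detected by the unique asymptotic phase $\phi(\rho_3)$ — is forced by the phase ordering and requires no further input.
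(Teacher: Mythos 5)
Your proof is correct, and it rests on the same basic mechanism as the paper's — read off the leading coefficient of $Z(F)(m)$, pass to the limiting phase, and contradict the strict ordering of the phases of $-\rho_0, -\rho_1, \rho_2, \rho_3$ — but the case analysis is organized differently. The paper first invokes \cref{prop:PT-partial-characterization} to note that $\sH^0(E)$ is torsion-free; a rank-zero subsheaf of a torsion-free sheaf vanishes, so $\sH^0(F) = 0$ and $F = \sH^1(F)[-1]$ with $\sH^1(F)$ of dimension at most $1$, leaving only your cases (ii) and (iii). You instead argue directly from the torsion pair defining $\sA^p(X)$ and allow the additional possibility that $\sH^0(F)$ is a nonzero pure $2$-dimensional sheaf, which you dispose of via $\phi(\rho_2) > \phi(\rho_3)$. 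The trade-off: the paper's route is shorter but uses the PT-semistability of $E$ through the torsion-freeness of $\sH^0(E)$ and needs only the inequalities $\phi(-\rho_0) > \phi(\rho_3) > \phi(-\rho_1)$, while your route uses the full strict chain but yields the slightly stronger statement that the conclusion holds for any $E \in \sA^p(X)$ of positive rank, semistable or not. A small bonus of your write-up: your limiting phases $\phi(\rho_3)$, $\phi(-\rho_1)$, $\phi(-\rho_0)$ carry the signs consistent with \cref{defn:PTstab}, whereas the paper's displayed limits read $\phi(-\rho_3)$, $\phi(\rho_1)$, $\phi(\rho_0)$, which appear to be sign slips, since those numbers need not lie in $\Hh$ where $\phi$ is defined.
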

\begin{proof}
    Let $Q$ denote the cokernel of the inclusion $F \subs E$ in $\sA^p(X)$, so that we have a short exact sequence
    \[ 0 \to F \to E \to Q \to 0 \]
    in $\sA^p(X)$. This induces an exact sequence
    \[ 0 \to \sH^0(F) \to \sH^0(E) \to \sH^0(Q) \to \sH^1(F) \to \sH^1(E) \to \sH^1(Q) \to 0 \]
    in $\Coh(X)$. Since by \cref{prop:PT-partial-characterization}, the sheaf $\sH^0(E)$ is torsion-free, if $\rk(F) = 0$, then $F = F'[-1]$, where $F' = \sH^1(F)$ is a coherent sheaf with $\dim(\Supp(F')) \le 1$. 
    
    If $\dim(\Supp(F')) = 1$, then
    \[ \lim_{m \to \infty} \phi(Z(F)(m)) = \phi(\rho_1) < \phi(-\rho_3) = \lim_{m \to \infty} \phi(Z(E)(m)). \]
    Similarly, if $\dim(\Supp(F')) = 0$, then
    \[ \lim_{m \to \infty} \phi(Z(F)(m)) = \phi(\rho_0) > \phi(-\rho_3) = \lim_{m \to \infty} \phi(Z(E)(m)). \]
    In neither case can we have $\phi(Z(F)(m)) = \phi(Z(E)(m))$ for $m \gg 0$. 
\end{proof}

\subsection{Moduli of PT-semistable objects}\label{setup}

The theory of moduli of PT-semistable objects was developed by Lo in \cite{lo-PT1} and \cite{lo-PT2}, culminating in \cite[Theorem 1.1]{lo-PT2}, where the author constructs the moduli stack of PT-semistable objects of fixed Chern character as a universally closed algebraic stack of finite type, and, in the absence of strictly semistable objects, as a proper algebraic space. 

Here we revisit these results through the framework of good moduli spaces, developed by Alper in \cite{AlperGMS}. To recall the definition, let $\sM$ be an algebraic stack and $\pi: \sM \to M$ a quasi-compact, quasi-separated morphism  to an algebraic space $M$. Then $\pi: \sM \to M$ is called a {\bf good moduli space} if the pushforward functor $\pi_*: \Qcoh(\sM) \to \Qcoh(M)$ is exact, and the natural map $\Oh_M \to \pi_*\Oh_\sM$ is an isomorphism.

We now establish our moduli setup for PT-semistable objects. Let $v \in \Kn(X)$ be a class of positive rank, and let $Z: \Kn(X) \to \C[m]$ define a PT-stability condition on $X$. The moduli stack of PT-semistable objects of class $v$ is defined to be the substack $\sM^{\mathrm{PT}}_Z(v) \subset \cD_{\cA^p}$ that to a scheme $S$ of finite type over $\C$ associates the groupoid of objects $E \in \Db(S \times X)$ such that 
\begin{enumerate}
    \item $E$ is relatively perfect over $S$, see \cite[Tag 0DHZ]{stacks-project}, and
    \item for all $\C$-points $s \in S$, the derived restriction $E|^\LL_{\{s\}\times X}$ to the fiber over $s$ lies in $\sA^p(X)$, is semistable with respect to $Z$, and has numerical class $v \in \Kn(X)$. 
\end{enumerate}

By \cite[Theorem 1.1]{lo-PT2}, the stack $\sM^{\mathrm{PT}}_Z(v)$ is universally closed and of finite type over $\C$, and moreover, in the case of coprime $\rk(v)$ and $H^2 \cdot \ch_1(v)$,  it is a $\G_m$-gerbe over a proper algebraic space $\rM^{\mathrm{PT}}_Z(v)$. In particular, $\rM^{\mathrm{PT}}_Z(v)$ is both a coarse and a good moduli space for $\sM^{\mathrm{PT}}_Z(v)$ in this case.

However, in the presence of strictly semistable objects we do not know if $\sM^{\mathrm{PT}}_Z(v)$ admits a good moduli space -- since the heart $\sA^p(X)$ is not noetherian, the machinery of \cite[Chapter 7]{AHLH} does not readily apply.

\subsection{Projectivity of moduli of PT-stable objects}\label{sect:projPT}
In this section, we show the projectivity of the good moduli space $\rM^{\mathrm{PT}}_Z(v)$ in the case of coprime rank and degree. We begin with some general considerations that do not require the coprimality assumption.

Let $\sE$ be the universal complex on $\sM^{\mathrm{PT}}_Z(v) \times X$, so that we have the diagram
\begin{center}
    \begin{tikzpicture}
    \matrix (m) [matrix of math nodes, row sep=1em, column sep=1em]
    { & \sE &  \\
    & \sM^{\mathrm{PT}}_Z(v) \times X & \\
    \sM^{\mathrm{PT}}_Z(v) & & X \\};
    \path[dotted]
    (m-1-2) edge node[auto,swap] {$ $} (m-2-2)
    ;
    \path[->] 
    (m-2-2) edge node[auto,swap] {$ p $} (m-3-1)
    (m-2-2) edge node[auto] {$ q $} (m-3-3)
    ;
    \end{tikzpicture}
\end{center}
We define the following line bundles on $\sM^{\mathrm{PT}}_Z(v)$ using the Donaldson morphism (see \cref{section:determinantal-bundles-on-PT}). Let
\[
    \cL_{m,n,a} \coloneqq \lambda_\cE(w_{m,n,a} \cdot [\cO_D]) \in \Pic(\sM^{\mathrm{PT}}_Z(v)),
\]
where $w_{m,n,a}$ is defined by \eqref{eq:grothClass} for some integers $m,n,a > 0$, and $D$ is any divisor in $|\cO_X(a)|$. As in \cite[Example 8.1.8 (iii)]{HL}, we set
\begin{align*}
v_0(v) ={}& - \chi(v \cdot h^3)[\cO_X] + \chi(v)h^3 \in \rK(X),
\end{align*}
where $h \coloneqq [\Oh_H] \in \rK(X)$, and define
\[
\cL_0 = \lambda_{\cE}(v_0(v)) \in \Pic(\cM^{\mathrm{PT}}_Z(v)).
\]

\begin{prop}\label{prop:semiAmplePT}
There are integers $m,n,a > 0$ such that 
\begin{enumerate}[(a)]
    \item the line bundle $\cL_{m,n,a}$ is semiample over $\cM^{\mathrm{PT}}_Z(v)$.
    \item for every morphism $S \to \sM^{\mathrm{PT}}_Z(v)$ from a smooth, proper, connected curve $S$ with $\deg(\cL_{m,n,a}|_S = 0)$, if $E_1, E_2 \in \sM^{\mathrm{PT}}_Z(v)$ are $\C$-points lying in the image of $S$, then $$\gr(\cH^0(E_1))^{[2]} \cong \gr(\cH^0(E_2))^{[2]}.$$
\end{enumerate}
\end{prop}
\begin{proof}
By the characterization of PT-semistable objects in \cref{prop:PT-partial-characterization} and the openness of PT-semistability \cite[Proposition 3.3]{lo-PT2}, we see that $\sM^{\mathrm{PT}}_Z(v)$ is an open substack of the stack $\cM_X(v)$ introduced in \cref{sect:moduli spaces}. Therefore the result is a consequence of \cref{thm:SemiAmple} and \cref{prop:L1fiber}.
\end{proof}

Next we show that the above line bundles descend to the good moduli space $\rM^{\mathrm{PT}}_Z(v)$, assuming that this space exists and that $U$ in the definition of $Z$ is the Todd class of $X$. Recall the following criterion \cite[Theorem 10.3]{AlperGMS} for a locally free sheaf $\sF$ on $\sM$ to descend to the good moduli space $M$. 
\begin{prop}\label{vbtogms}
    If $\pi: \sM \to M$ is a good moduli space and $\sM$ is locally Noetherian, then the pullback morphism $\pi^*: \Coh(M) \to \Coh(\sM)$ induces an equivalence of categories between locally free sheaves on $M$ and those locally free sheaves $\sF$ on $\sM$ such that for every geometric point $x: \Spec k \to \sM$ with closed image, the induced representation $x^*\sF$ of the stabilizer $G_x$ is trivial.
\end{prop}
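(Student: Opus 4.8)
The plan is to realize $\pi^*$ as fully faithful with the stated essential image, leaning throughout on the two defining properties of a good moduli space: exactness of $\pi_*$ and the isomorphism $\Oh_M \xrightarrow{\sim} \pi_*\Oh_\sM$. For full faithfulness, the projection formula (a statement local on $M$, where a locally free $\sE$ becomes free) combined with $\pi_*\Oh_\sM \cong \Oh_M$ gives $\pi_*\pi^*\sE \cong \sE \otimes_{\Oh_M} \pi_*\Oh_\sM \cong \sE$, so the unit $\sE \to \pi_*\pi^*\sE$ is an isomorphism. Adjunction then yields $\Hom_\sM(\pi^*\sE, \pi^*\sE') \cong \Hom_M(\sE, \pi_*\pi^*\sE') \cong \Hom_M(\sE, \sE')$, proving $\pi^*$ fully faithful and identifying $\pi_*$ as a quasi-inverse on the essential image. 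That a pullback $\pi^*\sE$ satisfies the stabilizer condition is immediate: at a closed point $x$ with linearly reductive stabilizer $G_x$ and residual gerbe $\mathcal G_x \cong BG_x$, the composite $\mathcal G_x \hookrightarrow \sM \xrightarrow{\pi} M$ factors through $\Spec k \to M$ because $M$ is an algebraic space, so $x^*\pi^*\sE$ is pulled back along $BG_x \to \Spec k$ and carries the trivial $G_x$-action. It remains to prove that every trivial-stabilizer bundle arises this way.

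The heart of the argument is thus essential surjectivity. Given a locally free $\sF$ on $\sM$ with trivial stabilizer representations at all closed points, the candidate descent is $\sE := \pi_*\sF$, which is coherent since $\sM$ is locally Noetherian and $\pi_*$ preserves coherence; I must show $\sE$ is locally free and that the counit $\epsilon\colon \pi^*\sE \to \sF$ is an isomorphism. This is \'etale-local on $M$, so I would invoke the \'etale-local structure of good moduli spaces to reduce to the model $\sM = [\Spec A / G] \to \Spec A^G =: \Spec R = M$, with $G$ linearly reductive and the closed point $x$ corresponding to a $G$-fixed maximal ideal $\mathfrak m \subset A$. Here $\sF$ corresponds to a finite projective $G$-equivariant $A$-module $F$ and $\pi_*\sF$ to its invariants $F^G$, while $\epsilon$ becomes the natural map $A \otimes_R F^G \to F$.

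Now the trivial-stabilizer hypothesis enters decisively: the fiber $F/\mathfrak m F = x^*\sF$ is a trivial $G$-representation, and since $G$ is linearly reductive the functor $(-)^G$ is exact, so the surjection $F \twoheadrightarrow F/\mathfrak m F$ remains surjective on invariants. I then lift a $k$-basis of $F/\mathfrak m F$ to invariant elements $e_1, \dots, e_n \in F^G$, which assemble into a $G$-equivariant map $A^{\oplus n} \to F$ with trivial action on the source; it is surjective modulo $\mathfrak m$, hence surjective on a $G$-stable neighborhood of $x$ by Nakayama, and hence an isomorphism there, being a surjection of projective modules of equal rank. This single computation shows simultaneously that $F$ is equivariantly trivial near $x$, that $F^G \cong R^{\oplus n}$ is locally free, and that $\epsilon$ is an isomorphism. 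Finally I spread out: the locus in $M$ over which $\epsilon$ is an isomorphism and $\sE$ is locally free is open and contains every closed point, so it is all of $M$, using that closed points of $M$ correspond to closed points of $\sM$ and that a nonempty closed complement would have to meet them.

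The main obstacle is precisely this local reduction and the propagation from closed points outward. Everything rests on the linear reductivity of the stabilizers of closed points and on the \'etale-local structure theorem for good moduli spaces, which is what makes $(-)^G$ exact and allows the trivial-stabilizer condition to force equivariant local triviality; without these the counit computation has no leverage. The remaining care is bookkeeping — ensuring the local descents glue and that the closed-point verification suffices globally — where the good moduli space properties (exactness of $\pi_*$, bijection on closed points, and universal closedness) are indispensable.
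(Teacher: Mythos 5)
Your argument is essentially sound, but note first that the paper does not prove this statement at all: it is quoted verbatim from Alper's \emph{Good moduli spaces for Artin stacks} (Theorem 10.3), so the only fair comparison is with Alper's proof. Your reconstruction agrees with his on full faithfulness (projection formula plus $\Oh_M \xrightarrow{\sim} \pi_*\Oh_\sM$) and on the core mechanism for essential surjectivity: linear reductivity of closed-point stabilizers makes invariants exact, triviality of the fiber representation lets you lift a basis of $x^*\sF$ to sections of $\pi_*\sF$, and Nakayama propagates the resulting trivialization. Where you differ is in how the local computation is set up. Alper works directly on the stack: he pushes the surjection $\sF \to x_*x^*\sF$ (restriction to the residual gerbe of a closed point) forward along the exact functor $\pi_*$, identifies $\pi_*(x_*x^*\sF)$ with $(x^*\sF)^{G_x} = x^*\sF$ using triviality of the representation, and concludes that the counit $\pi^*\pi_*\sF \to \sF$ is surjective near every closed point. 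You instead invoke the \'etale-local structure theorem of Alper--Hall--Rydh to reduce to $[\Spec A/G] \to \Spec A^G$. That route is clean and gives the equivariant local triviality of $\sF$ very transparently, but it is logically much heavier and, strictly speaking, anachronistic and overreaching: the slice theorem postdates the statement being proved and requires hypotheses (finite type over an algebraically closed field, affine stabilizers, suitable separatedness) that are not part of the proposition as stated, whereas Alper's residual-gerbe argument needs only the locally Noetherian hypothesis. In the context of this paper all stacks are of finite type over $\C$, so your reduction is available, but as a proof of the general statement it does not quite close.

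Two smaller points to tighten. First, your final spreading-out step (``an open set containing all closed points is everything'') needs $M$ to be Jacobson, or else should be rephrased as: the cokernel of the counit has closed support, whose image in $M$ is closed and hence, if nonempty, contains a closed point, contradicting the local computation; the kernel is then handled by the equal-rank argument. Second, in the step ``a surjection of projective modules of equal rank is an isomorphism'' you should justify the equality of ranks (constancy of rank on a connected neighborhood of the $G$-fixed closed point), and note that the non-surjectivity locus of $A^{\oplus n} \to F$ is $G$-stable so that Nakayama yields a saturated neighborhood.
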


We will also need the following auxiliary result.

\begin{lem}\label{subobjintlemma}
    If $E$ is a PT-semistable of class $v$ and $F \subs E$ is a subobject in $\sA^p(X)$ such that 
    \[ \phi(Z(F)(m)) = \phi(Z(E)(m)) \quad \text{for } m \gg 0, \]
    then
    \begin{equation}\label{subobjintegral}
         \int_X H^d \cdot \ch(F) \cdot U = \frac{\rk(F)}{\rk(E)} \int_X H^d\cdot \ch(E) \cdot U
    \end{equation}
    for $d = 0, 1, 2$.
\end{lem}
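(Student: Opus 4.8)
The plan is to convert the equality of phases into a single polynomial identity in $m$ and then read off the desired proportionalities from its coefficients. Write $r_d(G) = \int_X H^d \cdot \ch(G) \cdot U$ for $G \in \{E, F\}$ and $d = 0,1,2,3$; these are real (in fact rational) numbers, and $Z(G)(m) = \sum_{d=0}^3 \rho_d\, r_d(G)\, m^d$. Since $U$ has constant term $1$, only the degree-zero part $\ch_0(G) = \rk(G)$ of $\ch(G) \cdot U$ survives multiplication by $H^3$, so $r_3(G) = \big(\int_X H^3\big)\,\rk(G)$, which is positive because $H$ is ample and, for $F$, because $\rk(F) > 0$ by \cref{subobjposrank}. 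Setting $t = r_3(F)/r_3(E) = \rk(F)/\rk(E)$, the claim for $d = 3$ is automatic, and it remains to prove $r_d(F) = t\, r_d(E)$ for $d = 0, 1, 2$.

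The equality $\phi(Z(F)(m)) = \phi(Z(E)(m))$ for $m \gg 0$ says that $Z(F)(m)$ and $Z(E)(m)$ lie on a common ray through the origin, forcing $\im\!\big(Z(F)(m)\,\overline{Z(E)(m)}\big) = 0$ for all large $m$. As this expression is polynomial in $m$, it vanishes identically, and its coefficient of $m^k$ yields
\[ \sum_{d + d' = k} \omega_{d,d'}\, r_d(F)\, r_{d'}(E) = 0, \qquad \omega_{d,d'} := \im(\rho_d\, \overline{\rho_{d'}}). \]
The pairing $\omega_{d,d'}$ is antisymmetric with $\omega_{d,d} = 0$, and $\omega_{d,d'} \neq 0$ precisely when $\rho_d$ and $\rho_{d'}$ are not real multiples of one another, i.e. when they have distinct phases modulo $\pi$.

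I would next extract the top coefficients. The coefficient of $m^5$ gives $\omega_{2,3}\big(r_2(F) r_3(E) - r_3(F) r_2(E)\big) = 0$ and that of $m^4$ gives $\omega_{1,3}\big(r_1(F) r_3(E) - r_3(F) r_1(E)\big) = 0$. The phase configuration $\phi(\rho_2) > \phi(-\rho_0) > \phi(\rho_3) > \phi(-\rho_1)$ guarantees that $\rho_3$ has a phase distinct, modulo $\pi$, from those of $\rho_2$ and $\rho_1$ (negation does not affect real-proportionality), so $\omega_{2,3}, \omega_{1,3} \neq 0$. Dividing through by $r_3(E)$ then yields $r_2(F) = t\, r_2(E)$ and $r_1(F) = t\, r_1(E)$, settling the cases $d = 1, 2$.

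Finally, for $d = 0$ I would use the coefficient of $m^3$, namely
\[ \omega_{0,3}\big(r_0(F) r_3(E) - r_3(F) r_0(E)\big) + \omega_{1,2}\big(r_1(F) r_2(E) - r_2(F) r_1(E)\big) = 0. \]
Substituting the relations $r_1(F) = t\, r_1(E)$ and $r_2(F) = t\, r_2(E)$ just obtained, the $\omega_{1,2}$ term collapses to $\omega_{1,2}\big(t\, r_1(E) r_2(E) - t\, r_2(E) r_1(E)\big) = 0$, leaving $\omega_{0,3}\big(r_0(F) r_3(E) - r_3(F) r_0(E)\big) = 0$. Since $\phi(-\rho_0) > \phi(\rho_3)$ gives $\omega_{0,3} \neq 0$, dividing by $r_3(E) \neq 0$ produces $r_0(F) = t\, r_0(E)$, as desired. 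The main obstacle I anticipate is precisely this $d = 0$ case: unlike $d = 1, 2$, its defining coefficient couples the $(0,3)$ and $(1,2)$ index pairs, so it only becomes usable after the $d = 1, 2$ relations are in hand to annihilate the cross term; the remaining care is simply in checking that each relevant $\omega_{d,d'}$ is nonzero, which rests entirely on the strict phase inequalities of \cref{defn:PTstab}.
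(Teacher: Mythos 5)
Your proposal is correct and follows essentially the same route as the paper: the phase equality is converted to the vanishing of the polynomial $\im\big(Z(F)(m)\overline{Z(E)(m)}\big)$ (the paper's $p_v(F)=0$), and the coefficients of $m^5$, $m^4$, $m^3$ are extracted in the same order, with the $d=0$ case handled exactly as you do by using the $d=1,2$ relations to kill the $(1,2)$ cross term. The only cosmetic difference is notational: your $\omega_{d,d'}=\im(\rho_d\overline{\rho_{d'}})$ equals $-(r_{dd'}-r_{d'd})$ in the paper's notation, and the nonvanishing of these quantities rests on the same observation that the strict phase inequalities force no two $\rho_i$ to be real multiples of one another.
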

\begin{proof}
The assumption on $F$ is equivalent to saying that $p_v(F) = 0$, where $p_v$ is defined by
\begin{equation*}
    p_v: \rK(X) \to \R[m], \quad p_v(F) = \im Z(v) \re Z(F) - \re Z(v) \im Z(F).
\end{equation*}
To lighten the notation, for every $G \in \rK(X)$ we set  \[ I_d(G) = \int_X H^d \cdot \ch(G) \cdot U, \quad d = 0, \ldots, 3. \]
    We note that since
    \[ I_3(G) = \int_X H^3 \cdot \ch(G) \cdot U = \deg(X) \rk(G), \]
    and since $\rk(F) > 0$ by \cref{subobjposrank}, equation \eqref{subobjintegral} is equivalent to 
    \[ I_3(E) I_d(F) = I_3(F) I_d(E). \]
    Moreover, we set
    \[ r_{ij} = \re(\rho_i) \im(\rho_j), \quad i, j = 0, \ldots, 3, \]
    and note that since none of the complex numbers $\rho_i$ are collinear, the real numbers $r_{ij} - r_{ji}$ are all nonzero for $i \neq j$.
    
    The condition $p_v(F) = 0$ can now be written as
    \begin{equation}\label{rIsum}
        \sum_{d=0}^6 \sum_{i+j = d} r_{ij} ( I_i(E) I_j(F) - I_i(F) I_j(E)) m^d = 0.
    \end{equation}
    We compare coefficients on both sides of this equation. First, the $m^5$ term in \eqref{rIsum} gives
    \[ (r_{32}-r_{23})(I_2(E) I_3(F) - I_2(F) I_3(E)) = 0, \]
    so dividing by $\deg(X)$ and $r_{32}-r_{23}$ gives \eqref{subobjintegral} for $d = 2$. Similarly from the $m^4$ term, noting that the $i = j = 2$ term cancels out, we get
    \[ (r_{31}-r_{13})(I_1(E) I_3(F) - I_1(F) I_3(E)) = 0, \]
    giving \eqref{subobjintegral} for $d = 1$. Finally, the $m^3$ term gives
    \[ (r_{30}-r_{03})(I_0(E) I_3(F) - I_0(F) I_3(E)) + (r_{21}-r_{12})(I_1(E) I_2(F) - I_1(F) I_2(E) = 0. \]
    The second term on the left cancels since $I_3(E) \neq 0$ and by what we have already shown, we have
    \[ I_3(E) I_1(E) I_2(F) = I_3(F) I_1(E) I_2(E) = I_3(E) I_1(E) I_2(E). \]
    Thus, we obtain \eqref{subobjintegral} for $d = 0$, completing the proof.
\end{proof}

\begin{prop}\label{L2descendstogms}
Let $Z$ be a PT-stability function on $\sA^p(X)$ and assume that $U = \td_X$ in the definition of $Z$ is the Todd class of $X$. Assuming a good moduli space $\rM^{\mathrm{PT}}_Z(v)$ exists, the line bundles $\cL_0$ and $\cL_{m,n,a}$ descend to $\rM^{\mathrm{PT}}_Z(v)$.
\end{prop}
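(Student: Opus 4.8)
The plan is to invoke Alper's descent criterion \cref{vbtogms}. Since $\rM^{\mathrm{PT}}_Z(v)$ is a good moduli space for the locally Noetherian stack $\sM^{\mathrm{PT}}_Z(v)$, the line bundle $\sL_i$ descends if and only if, for every geometric point $x \colon \Spec k \to \sM^{\mathrm{PT}}_Z(v)$ with closed image, the induced representation of the stabilizer $G_x$ on the fiber $\sL_i|_x$ is trivial. A point with closed image is represented by a polystable object, that is, a direct sum $E = \bigoplus_k E_k \otimes_\C \C^{m_k}$ of pairwise non-isomorphic PT-stable objects $E_k$ of the same phase as $E$, where $\C^{m_k}$ is the multiplicity space; by Schur's lemma its stabilizer is the reductive group $G_x = \prod_k \mathrm{GL}(m_k, \C)$ acting through the spaces $\C^{m_k}$. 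The task is thus to compute this action and to show it is trivial.

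For the fiber I would use cohomology and base change to identify $\sL_i|_x = \det \rR\Gamma(X, E \otimes^\L v_i(v))$, with $G_x$ acting via its action on $E$. From the decomposition $E \otimes^\L v_i(v) = \bigoplus_k (E_k \otimes^\L v_i(v)) \otimes_\C \C^{m_k}$, an element $g = (g_k) \in \prod_k \mathrm{GL}(m_k, \C)$ acts on the $k$-th summand only through $\C^{m_k}$, so multiplicativity of the determinant gives that $g$ acts on $\sL_i|_x$ by the character
\[ g \longmapsto \prod_k \det(g_k)^{\chi(X,\, E_k \otimes^\L v_i(v))}. \]
As $\det \colon \mathrm{GL}(m_k, \C) \to \G_m$ is surjective, this character is trivial precisely when $\chi(X, E_k \otimes^\L v_i(v)) = 0$ for every stable factor $E_k$, so it suffices to prove these Euler characteristics vanish.

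To see the vanishing I would combine Riemann--Roch with \cref{subobjintlemma}. Writing $I_d(G) = \int_X H^d \cdot \ch(G) \cdot \td_X$ and using $\ch([\Oh_H]) = 1 - e^{-H}$, a short computation on the threefold $X$ yields $\chi([E_k] \cdot h^3) = I_3(E_k)$, $\chi([E_k] \cdot h^2) = I_2(E_k) - I_3(E_k)$, $\chi([E_k] \cdot h) = I_1(E_k) - \tfrac12 I_2(E_k) + \tfrac16 I_3(E_k)$, and $\chi([E_k]) = I_0(E_k)$, together with the analogous identities for $v$. Expanding $\chi(X, E_k \otimes^\L v_i(v)) = \chi([E_k] \cdot v_i(v))$ by linearity of $\la_\sE$ and substituting these expressions, each Euler characteristic becomes a bilinear combination of the $I_d(E_k)$ and $I_d(v)$; for example $\chi([E_k] \cdot v_2(v)) = I_2(v) I_3(E_k) - I_3(v) I_2(E_k)$. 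The decisive input is that $E_k$ is a subobject of $E$ of equal phase, so \cref{subobjintlemma} --- which is exactly where the hypothesis $U = \td_X$ enters --- yields $I_d(E_k) = \tfrac{\rk(E_k)}{\rk(v)} I_d(v)$ for $d = 0, 1, 2$, the case $d = 3$ holding automatically since $I_3(G) = \deg(X)\rk(G)$. Substituting this proportionality annihilates each such bilinear combination, giving the desired vanishing.

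The heart of the matter, and the step I expect to require the most care, is this final vanishing: the classes $v_i(v)$ are engineered so that the relevant Euler characteristics assemble into $2 \times 2$ determinants in the quantities $I_d$, which then vanish by the proportionality furnished by \cref{subobjintlemma}. The hypothesis $U = \td_X$ is indispensable here, as it is precisely what identifies the integrals $I_d$ governing the stability condition with genuine Euler characteristics through Riemann--Roch; this identification is what lets the numerical consequence of equal phase translate into the vanishing of the characters controlling the stabilizer action.
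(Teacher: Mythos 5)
Your proposal is correct and follows essentially the same route as the paper: reduce via Alper's descent criterion to the vanishing of $\chi([E_k]\cdot v_i(v))$ for each stable summand of a polystable representative, then deduce this from Hirzebruch--Riemann--Roch (where $U=\td_X$ enters) together with the proportionality of \cref{subobjintlemma}. The only cosmetic differences are that you re-derive the character computation that the paper cites from \cite[Proposition 4.2]{tajakka2022uhlenbeck}, and you expand the Euler characteristics into explicit $2\times 2$ determinants where the paper instead observes that $\ch(v_i(v))$ is a linear combination of powers of $H$ and concludes by linearity that the integral is proportional to $\chi(v\cdot v_i(v))=0$.
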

\begin{proof}
We show, more generally, that any line bundle of the form $$\cL \coloneqq \lambda_\cE(u) \in \Pic(\sM^{\mathrm{PT}}_Z(v))$$
descends to the good moduli space, whenever $u \in \rK(X)$ is a class satisfying
\begin{enumerate}
    \item $\chi(v \cdot u) = 0$,
    \item $u$ is a linear combination of powers of $h$ in $\rK(X)$.
\end{enumerate}
Let $x \in \rM^{\mathrm{PT}}_Z(v)$ be a closed point corresponding to the $Z$-polystable object
\[ E = \bigoplus_j F_j, \quad \text{where } p_v(F_j) = 0 \text{ for all } j. \]
By \cite[Proposition 4.2]{tajakka2022uhlenbeck}, the automorphism group of $E$ acts trivially on the fiber of $\sL$ at $x$ if and only if $\chi([F_j]\cdot u) = 0$ for each $j$. 
Since $u$ is a linear combination of powers of $h$ and
\[ \ch(h) = \ch(\Oh_X) - \ch(\Oh_X(-H)) = H - \frac{1}{2} H^2 + \frac{1}{6} H^3,\]
we obtain by \cref{subobjintlemma} and linearity that
\[ \int_X \ch(F_j) \ch(u) \td_X = \frac{\rk(F_j)}{\rk(E)} \int_X \ch(E) \ch(u) \td_X. \]
Since $[E] = v \in \Kn(X)$, by the Hirzebruch-Riemann-Roch formula again,
\[ \int_X \ch(E) \ch(u) \td_X = \chi(v \cdot u) = 0. \] 

We now check that the classes $v_0(v)$ and $w_{m,n,a} \cdot [\cO_D]$ satisfy conditions (1) and (2) above. Clearly $v_0(v)$ satisfies (2), and they both satisfy (1): 
\[
    \chi(v \cdot v_0(v)) = 0, \quad \chi(v \cdot w_{m,n,a} \cdot [\cO_D]) = 0.
\]
Using
\[
[\cO_X(m)] = \sum_{i \ge 0} {m + i -1 \choose i}  h^i, \quad [\cO_X(n)] = \sum_{i \ge 0} {n + i -1 \choose i}  h^i
\]
and 
\[
    [\cO_D] = ah - {a \choose 2}h^2 + {a \choose 3}h^3,
\]
we obtain by linearity that $w_{m,n,a} \cdot [\cO_D]$ also satisfies (2).
\end{proof}

In the following, we assume that $\rk(v)$ and $H^2 \cdot \ch_1(v)$ are coprime. By the characterization of PT-stable objects from \cref{prop:PT-partial-characterization}, the moduli stack $\sM^\mathrm{PT}_Z(v)$ does not depend on the class $U \in A^*(X)$ in the definition of the PT-stability function $Z$. Therefore we may assume that $U = \td_X$ to ensure that the line bundles $\cL_0$ and $\cL_{m,n,a}$ descend to line bundles $L_0$ and $L_{m,n,a}$ on the good moduli space $\rM^\mathrm{PT}_Z(v)$, cf. \cref{L2descendstogms}. 

Let $m,n,a > 0$ be integers as in \cref{prop:semiAmplePT}. In particular, the line bundle $\cL_{m,n,a}$ is semiample over $\sM^\mathrm{PT}_Z(v)$, and consequently, $L_{m,n,a}$ is semiample on $\rM^\mathrm{PT}_Z(v)$. Then, there exists $k > 0$ such that $L_{m,n,a}^{\otimes k}$ is globally generated, inducing a map
\[
\rM^\mathrm{PT}_Z(v) \xrightarrow{\eta} \P \coloneqq \P\Gamma(\rM^\mathrm{PT}_Z(v),L_{m,n,a}^{\otimes k}).
\]
By taking $k > 0$ sufficiently large, we may also assume that $\eta$ has connected fibers (see \cite[Lemma 2.1.28]{lazarsfeld2017positivity}). Note that the vector space $\Gamma(\rM^{\mathrm{PT}}_Z(v), L_{m,n,a}^{\otimes k})$ is finite-dimensional since $\rM^{\mathrm{PT}}_Z(v)$ is proper.

We fix a $\C$-point $p \in \P$, and consider the following commutative diagram
\[
    \begin{tikzcd}
        \sM^\mathrm{PT}_Z(v) \ar[r] & \rM^\mathrm{PT}_Z(v) \ar[r,"\eta"] & \P \\
        \sN \ar[r,"\pi"] \ar[u,hookrightarrow] & N \ar[r] \ar[u,hookrightarrow] & \Spec \C \ar[u,"p"]
    \end{tikzcd}
\]
where $\sN$, resp. $N$, denotes the fiber of $\sM^\mathrm{PT}_Z(v)$, resp. $\rM^\mathrm{PT}_Z(v)$, over $p$. In particular the canonical map $\pi: \sN \to N$ is a proper good moduli space.

Moreover, by \cref{prop:PT-partial-characterization}, for every $\C$-point $[E] \in \sM^\mathrm{PT}_Z(v)$ we know that $\cH^0(E)$ is $\mu$-stable of homological dimension at most 1. As we are in the coprime case, applying \cref{prop:semiAmplePT} (b) shows that there exists a $\mu$-stable sheaf $F$ of homological dimension 1 such that every $\C$-point $s \in \sN$ is represented by an exact triangle of the form
\[
F \to E_s \to T_s[-1]
\]
where $T_s$ is 0-dimensional. Indeed, since $N$ is a proper and connected algebraic space, it can be covered by smooth, proper and connected curves $S \to N$, allowing us to apply \cref{prop:semiAmplePT} (b).

Set $r \coloneqq \rk(v)$ and $\beta \coloneqq \ch_1(v)$. In the following we denote by $\sM^{\rm{PT}}_Z(r,\beta)$, resp. $\rM^{\rm{PT}}_Z(r,\beta)$, the disjoint union of all moduli stacks $\sM^{\rm{PT}}_Z(c)$, resp. all corresponding good moduli spaces $\rM^{\rm{PT}}_Z(c)$, over all numerical classes $c \in \Kn(X)$ with $\rk(c) = r$ and $\ch_1(c)=\beta$.

\begin{prop}\label{prop:L0Ampleness}
If $\rk(v)$ and $H^2 \cdot \ch_1(v)$ are coprime, then $N$ is a scheme and the line bundle $L_0$ is ample over $N$.
\end{prop}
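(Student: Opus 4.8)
The plan is to identify $N$ with a projective Quot scheme and to recognize $L_0|_N$, up to a positive power, as a Grothendieck-type ample line bundle on it. By construction every $\C$-point of $N$ is a PT-stable object $E$ with $\cH^0(E)$ equal to the fixed $\mu$-stable torsion-free sheaf $F$ of homological dimension at most $1$; hence its reflexive hull $R := F^\dd$ and the pure $1$-dimensional quotient $Q := R/F$ are constant along $N$, and the only datum varying over $N$ is the $0$-dimensional sheaf $T = \cH^1(E)$ together with the class of the triangle $F \to E \to T[-1]$. Because $\rk(v)$ and $H^2 \cdot \ch_1(v)$ are coprime, \cref{prop:PT-partial-characterization} applies, and I would invoke \cite[Proposition 5.5 and Appendix B]{Beentjes2021DT/PT}: after fixing the reflexive hull, the PT-stable objects of class $v$ with $\cH^0 = F$ are parametrized by a single relative Quot scheme. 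Concretely, the first step is to exhibit an isomorphism between $N$ and a projective Quot scheme $\mathcal{Q}$ of zero-dimensional quotients (of length determined by $v$ and $F$) of a fixed coherent sheaf $\mathcal{G}$ attached to $F$, compatible with the universal families in the sense that, under the triangle, $\cH^1$ of the universal complex corresponds to the universal quotient $\mathcal{T}$ on $\mathcal{Q} \times X$.

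Next I would record the positivity input on the Quot side. For $\ell \gg 0$ the determinantal line bundle $\Theta = \det \rR p_*(\mathcal{T} \otimes q^*\Oh_X(\ell))$, with $p, q$ the two projections from $\mathcal{Q} \times X$, is ample: this is Grothendieck's embedding of the Quot scheme of $0$-dimensional quotients into a Grassmannian, and since $\mathcal{T}$ is fibrewise $0$-dimensional of constant length the bundle $\Theta$ is, up to a constant twist, $\det p_* \mathcal{T}$.

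The heart of the argument is to match $L_0|_N$ with $\Theta$. Restricting the universal complex to $N$, the triangle $F \to E \to T[-1]$ globalizes to a triangle $\mathrm{pr}_X^* F \to \cE|_N \to \mathcal{T}[-1]$, where $\mathrm{pr}_X^* F$ is pulled back from $X$. Additivity of the determinant of cohomology in distinguished triangles then gives, for any $K \in \rK(X)$,
\[ \lambda_{\cE|_N}(K) \cong \lambda_{\mathrm{pr}_X^* F}(K) \otimes \lambda_{\mathcal{T}}(K)^\vee, \]
and the first factor is trivial since $\mathrm{pr}_X^* F$ is pulled back from $X$. Applying this to $v_0(v) = -\chi(v \cdot h^3)[\Oh_X] + \chi(v)\,h^3$ and using $\lambda_{\mathcal{T}}([\Oh_X]) = \det p_* \mathcal{T}$, together with $\chi(v \cdot h^3) = \deg(X)\rk(v) > 0$, one finds that $L_0|_N$ is a positive power of $\det p_* \mathcal{T}$ twisted by $\lambda_{\mathcal{T}}(h^3)^{-\chi(v)}$. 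Here the $h^3$-correction term, inserted precisely to enforce $\chi(v \cdot v_0(v)) = 0$, must be shown to contribute only a trivial (hence numerically negligible) twist, after which $L_0|_N$ is identified with a positive power of the ample bundle $\Theta$ and ampleness follows.

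I expect the determinantal identification in the last step to be the main obstacle, on two counts. First, one must upgrade the Beentjes--Ricolfi stratification to an honest isomorphism $N \cong \mathcal{Q}$ matching universal families, which is exactly where the coprime hypothesis and the fixed reflexive hull are essential. Second, one must control the correction term $\lambda_{\mathcal{T}}(h^3)$: a Grothendieck--Riemann--Roch computation shows that its first Chern class is carried by the locus where the universal $0$-dimensional quotient meets a fixed point of $X$, and one has to verify that this does not spoil the positive power of $\det p_*\mathcal{T}$. Should only a stratification rather than a global isomorphism be available, the fallback is to prove $\deg(L_0|_C) > 0$ for every integral curve $C \subset N$ by the same determinantal comparison on each stratum, and then conclude ampleness on the proper algebraic space $N$ via the curve criterion.
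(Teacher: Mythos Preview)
Your approach is essentially the same as the paper's: identify $N$ with (a closed subscheme of) the Quot scheme of zero-dimensional quotients of $\sExt^1(F,\cO_X)$ via \cite[Proposition 5.5]{Beentjes2021DT/PT}, use the global triangle $q^*F \to \cG \to \cQ^D[-1]$ to reduce $L_0|_N$ to a determinantal line bundle for the universal quotient, and then invoke the standard ampleness result on Quot schemes \cite[Proposition 2.2.5]{HL}.

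Two small points where your write-up diverges from what actually happens. First, in the Beentjes--Ricolfi picture the universal quotient $\cQ$ is not $\cH^1(\cE)$ itself but its dual: $\cH^1(\cE_s) = T_s = \cQ_s^D = \sExt^3(\cQ_s,\cO_X)$. This duality costs nothing for the determinantal computation, but it is the precise form of the ``fixed coherent sheaf $\mathcal{G}$'' you left unspecified.

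Second, and more importantly, the obstacle you anticipate concerning $\lambda_{\cT}(h^3)$ is illusory. For a flat family $\cQ$ of $0$-dimensional sheaves, the Donaldson morphism $\lambda_{\cQ}:\rK(X)\to\Pic$ factors through rank: indeed $\ch(\cQ)$ is concentrated in fiberwise codimension $3$, so $c_1(\lambda_{\cQ}(\alpha))$ only sees $\rk(\alpha)$. Since $\rk(h^3)=0$, the term $\lambda_{\cQ}(h^3)$ is trivial, and one obtains directly
\[
\lambda_{\cQ}(v_0(v))^\vee \cong \lambda_{\cQ}([\cO_X])^{\chi(v\cdot h^3)},
\]
which is ample on the Quot scheme with positive exponent $\chi(v\cdot h^3)=\deg(X)\rk(v)$. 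No Grothendieck--Riemann--Roch analysis of incidence loci, and no fallback curve criterion, is needed.
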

\begin{proof}
Denote by $\Quot(\sExt^1(F,\cO_X))$ the Quot scheme of zero-dimensional quotients of $\sExt^1(F,\cO_X)$ on $X$. By \cite[Proposition 5.5]{Beentjes2021DT/PT} there exists a closed embedding $\psi_F$ described by the following diagram
\[
    \xymatrix@C=15pt@R=15pt{ \Quot(\sExt^1(F,\cO_X)) \ar[rr]^{\psi_F} \ar[rd]_{\varphi_F} &  &  \rM^{\mathrm{PT}}_Z(r,\beta) \\  & \cM^{\mathrm{PT}}_Z(r,\beta) \ar[ru]  &  }
\]
sending a geometric quotient $[q: \sExt^1(F,\cO_X) \to Q]$ in $\Quot(\sExt^1(F,\cO_X))$ to a suitable PT-stable object $$F \to E \to Q^D[-1]$$ where $Q^D = \sExt^3(Q,\cO_X)$ is the dual of $Q$ on $X$ (see below Remark 5.2 in \cite{Beentjes2021DT/PT} for the construction). We note that the characterization of PT-stable objects from \cref{prop:PT-partial-characterization} is necessary for the map $\psi_F$ to have codomain $\rM^{\mathrm{PT}}_Z(r,\beta)$. 
Then the base change of $\psi_F$ via the closed subspace $N \subset \rM^{\mathrm{PT}}_Z(r,\beta)$ gives a commutative diagram
\[
    \xymatrix{ Q_1 \ar[rr]^{\psi_{F,N}} \ar[rd]_{\varphi_{F,N}} &  &  N \\  & \sN \ar[ru]_{\pi}  &  }
\]
 where $Q_1 \subset \Quot(\sExt^1(F,\cO_X))$ is the induced closed subscheme and $\psi_{F,N}$ is a closed embedding. Moreover, note that $\psi_{F,N}$ is surjective since any $\C$-point $s \in N$ corresponding to a PT-stable object
\[
    F \to E_s \to T_s[-1]
\]
gives rise to a surjection $\sExt^1(F,\cO_X) \to Q$, with $Q = \sExt^3(T_s,\cO_X)$. Indeed, this can be seen by applying $\sHom(-,\cO_X)$ to the above exact triangle and then looking at the induced long exact sequence.

Let $q : Q_1 \times X \to X$ denote the natural projection, and let $\cQ$ denote the universal quotient on $Q_1 \times X$. Then the pullback $\cG$ of the universal complex $\cE$ on $\sN \times X$ via $\varphi_{F,N}$ fits in an extension
\[
q^*F \to \cG \to \cQ^D[-1]
\]
over $Q_1 \times X$, viewed as an element in $\Ext^1_{Q_1 \times X}(\cQ^D[-1],q^*F)$, cf. proof of \cite[Proposition 5.5]{Beentjes2021DT/PT}. We obtain 
\begin{align*}
    \psi_{F,N}^*(L_0|_N)\cong{}&  \varphi_{F,N}^*(\cL_0|_{\sN})\\
    \cong{}& \lambda_{\varphi_{F,N}^*\cE}(v_0(v)) \quad{}& (\text{by \cite[Lemma 8.1.2 ii)]{HL}})\\
    \cong{}&  \lambda_{q^*F}(v_0(v)) \otimes \lambda_{\cQ^D}(v_0(v))^\vee \quad{}& (\text{by \cite[Lemma 8.1.2 i)]{HL}})\\
    \cong{}&  \lambda_{\cQ}(v_0(v))^\vee,
\end{align*}
where the last isomorphism follows by using the fact that $\lambda_{q^*F}(v_0(v))$ is trivial and the duality
\[
    \Ext^1_{Q_1 \times X}(\cQ^D[-1],q^*F) \cong \Ext^1_{Q_1 \times X}((q^*F)^\vee[1],\cQ[-1]).
\]
Similarly
\[
    \psi_{F,N}^*(L_{m,n,a}|_N) \cong \lambda_{\cQ}(w_{m,n,a} \cdot [\cO_D])^\vee.
\]
Since $\cQ$ is a flat family of $0$-dimensional sheaves, we also have that $\lambda_\cQ(h^k)$ is trivial for $k > 1$. We obtain that
\[
    \psi_{F,N}^*(L_0|_N) \cong \lambda_{\cQ}([\cO_X])^{\chi(v \cdot h^3)},
\]
and
\[
    \psi_{F,N}^*(L_{m,n,a}|_N) \cong \lambda_\cQ(h)^b
\]
for some integer $b$ (as $w_{m,n,a} \cdot [\cO_D]$ is a linear combination of $h, h^2, h^3$ in $\rK(X)$). However, note that $L_{m,n,a}^{\otimes k}$ is trivial over $N$, and thus $\lambda_\cQ(h)^{kb}$ is also trivial.  
Since $$\lambda_\cQ([\cO_X(\ell)]) \cong \lambda_\cQ([\cO_X]) \otimes \lambda_\cQ(h)^{\ell}$$ is ample on $Q_1$ for $\ell > 0$ sufficiently large, cf. \cite[Examples 8.1.3 ii)]{HL}, we deduce that $\lambda_\cQ([\cO_X])$ is also ample over $Q_1$. 

Finally, since $\psi_{F,N}$ is a finite surjective morphism and
\[
    \psi_{F,N}^*(L_0) \cong \lambda_\cQ([\cO_X])^{\chi(v \cdot h^3)}, 
\]
is ample over $Q_1$, it follows by \cite[0GFB]{stacks-project} that $N$ is a scheme and $L_0$ is ample on $N$.
\end{proof}

\begin{thm}\label{thm:MainAmpleness}
Assume that $\rk(v)$ and $H^2 \cdot \ch_1(v)$ are coprime, and let $m,n,a > 0$ be integers as in \cref{prop:semiAmplePT}. Then the line bundle $L_0 \otimes L_{m,n,a}^{\otimes b}$ is ample on $\rM^{\mathrm{PT}}_Z(v)$ for some $b > 0$. In particular, $\rM^{\mathrm{PT}}_Z(v)$ is a projective scheme.
\end{thm}
\begin{proof}
As above, we consider the proper map $\eta : \rM^{\mathrm{PT}}_Z(v) \to \P$ induced by the globally generated line bundle $L_{m,n,a}^{\otimes k}$ for some sufficiently large integer $k > 0$. By \cref{prop:L0Ampleness} the line bundle $L_0$ is ample on each fiber of $\eta$, which implies that $L_0$ is $\eta$-ample, cf. \cite[Tag 0D3A]{stacks-project}. Moreover, we obtain by \cite[Tag 0D32]{stacks-project} that $\rM^{\mathrm{PT}}_Z(v)$ is a scheme. 

Now, by \cite[Proposition 1.7.10]{lazarsfeld2017positivity}, the line bundle
\[
    L_0 \otimes \eta^*\cO_{\P}(1)^{\otimes b}
\]
is ample on $\rM^{\mathrm{PT}}_Z(v)$ for $b > 0$ sufficiently large. By construction $\eta^*\cO_{\P}(1)$ is isomorphic to $L_{m,n,a}^{\otimes k}$, from which we deduce that $L_0 \otimes L_{m,n,a}^{\otimes kb}$ is ample on $\rM^{\mathrm{PT}}_Z(v)$. 
\end{proof}


\section{Bridgeland moduli spaces}\label{sec:Bridgeland}

In this section, we study the moduli spaces arising in the higher-rank DT/PT correspondence in a particular case, building upon the recent work of Jardim, Lo, Maciocia, and Martinez \cite{JLMM2025HigherDTPT}.

Let $(X,H)$ be a polarized smooth, projective, connected threefold of Picard rank 1 over $\C$, where $H \subset X$ is a very ample divisor on $X$. Hence the real N\'eron--Severi group of $X$ is $\mathrm{N}^1(X)_\R \cong \R [H]$, where $[H]$ denotes the class of $H$ in $\mathrm{N}^1(X)_\R$. We also assume that $X$ is a threefold admitting Bridgeland stability conditions constructed via the double tilt procedure of \cite{bayer2014bridgeland}. See the introduction for a list of examples where such stability conditions are known to exist.

\subsection{Bridgeland stability conditions}

We first recall the double tilt construction of Bridgeland stability conditions on $X$, following \cite{bayer2014bridgeland}. For a numerical class $B \in \mathrm{N}^1(X)_\R$ and $E \in \Db(X)$, we denote by $\ch^B(E) \coloneqq \ch(E) \cdot e^{-B}$ the twisted Chern character of $E$, i.e.
\begin{align*}
    \ch_0^B(E) &= \ch_0(E), \quad \ch_1^B(E) = \ch_1(E) - B\ch_0(E),\\
    \ch_2^B(E) &= \ch_2(E) - B \ch_1(E) + \frac{B^2}{2}\ch_0(E),\\
    \ch_3^B(E) &= \ch_3(E) - B \ch_2(E) + \frac{B^2}{2}\ch_1(E) - \frac{B^3}{6}\ch_0(E).
\end{align*}
Let $\omega \in \Amp^1(X)_\R$ and consider the $B$-twisted slope function
\[
    \mu_{\omega, B}(E)= \begin{cases}
        \dfrac{\omega^2\ch_1^B(E)}{\ch_0^B(E)} \quad &\text{if } \ch_0(E) \neq 0,\\
        + \infty \quad &\text{if } \ch_0(E) = 0
    \end{cases}
\]
on $\Coh(X)$. It is known that this slope function defines a notion of semistability on $\Coh(X)$ which moreover admits the Harder-Narasimhan property. Therefore one obtains that the full subcategories
\begin{align*}
    \cT_{\omega, B} &= \{ E \in \Coh(X) \mid \mu_{\omega, B}(Q) > 0 \text{ for every quotient } E \twoheadrightarrow Q \} \\
    \cF_{\omega, B} &= \{ E \in \Coh(X) \mid \mu_{\omega, B}(F) \le 0 \text{ for every subsheaf } F \hookrightarrow E \}
\end{align*}
form a torsion pair on $\Coh(X)$. The extension-closed subcategory 
\[
    \cB_{\omega, B} \coloneqq \langle \cF_{\omega, B}[1],\cT_{\omega, B} \rangle \subseteq \Db(X)
\]
is the corresponding heart.

Now consider the slope function
\[
    \nu_{\omega, B}(E)= \begin{cases}
        \dfrac{\omega^2\ch_2^B(E) - \frac{\omega^3}{6}\ch_0^B(E)}{ \omega^2\ch_1^B(E)} \quad &\text{if } \omega^2\ch^B_1(E) \neq 0,\\
        + \infty \quad &\text{if } \omega^2\ch^B_1(E) = 0
    \end{cases}
\]
on $\cB_{\omega, B}$. As shown in \cite[Section 3]{bayer2014bridgeland}, the full subcategories
\begin{align*}
    \cT'_{\omega, B} &= \{ E \in \cB_{\omega, B} \mid \nu_{\omega, B}(Q) > 0 \text{ for every quotient } E \twoheadrightarrow Q \} \\
    \cF'_{\omega, B} &= \{ E \in \cB_{\omega, B} \mid \nu_{\omega, B}(F) \le 0 \text{ for every subsheaf } F \hookrightarrow E \}
\end{align*}
form a torsion pair on $\cB_{\omega, B}$, from which one obtains the heart
\[
    \cA_{\omega, B} \coloneqq \langle \cF'_{\omega, B}[1],\cT'_{\omega, B} \rangle \subseteq \Db(X).
\]

Next we place ourselves in the situation of \cite[p. 37]{JLMM2025HigherDTPT}. For every $(\beta, \alpha) \in \R \times \R_{> 0}$, set
\[
    \cA^{\beta,\alpha} \coloneqq \cA_{\alpha \sqrt{3}[H],\beta[H]},
\]
and for $s \in \R_{> 0}$ consider the central charge given by
\[
    Z_{\beta,\alpha,s}(E) \coloneqq - \ch_3^\beta(E) + \left(s + \frac{1}{6}\right)\alpha^2 \ch_1^\beta(E) + i\left(\ch_2^\beta(E) - \frac{\alpha^2}{2}\ch_0^\beta(E)\right),
\]
where $\ch_i^\beta$ is the real number satisfying the equality $\ch_i^\beta \cdot [H]^i = \ch_i^{\beta [H]}$ in the numerical group $\mathrm{N}^*(X)_\R$.

As in op. cit., we obtain a family $$(\cA^{\beta,\alpha}, Z_{\beta,\alpha,s}),\quad (\beta,\alpha,s) \in \R \times \R_{> 0} \times \R_{> 0}$$ of Bridgeland stability conditions (in the sense of \cite{bridgeland}). These stability conditions satisfy the support property and are moreover geometric, in the sense that the skyscraper sheaves $\cO_x$ are stable of the same phase. In this  case the notion of stability on $\cA^{\beta,\alpha}$ is defined with respect to the slope function
\[
    \lambda_{\beta,\alpha,s}(E) = \begin{cases}
        \dfrac{- \mathrm{Re}\, Z_{\beta,\alpha,s}(E)}{\im Z_{\beta,\alpha,s}(E)} \quad &\text{if } \im Z_{\beta,\alpha,s}(E) \neq 0,\\
        + \infty \quad &\text{if } \im Z_{\beta,\alpha,s}(E) = 0.
    \end{cases}
\]

To be consistent with the notation in \cref{sect:PT-semistable objects} (see \cref{rmk:convention}), we will consider the shifted Bridgeland stability conditions
\[
    \sigma_{\beta,\alpha,s} = (\cD^{\beta,\alpha}, -Z_{\beta,\alpha,s}), \quad \cD^{\beta,\alpha} \coloneqq \cA^{\beta,\alpha}[-1].
\]
Note that this choice does not change the slope function $\lambda_{\beta,\alpha,s}$, but only shifts the stable objects.

\begin{defn}
A nonzero object $E \in \cD^{\beta,\alpha}$ is $\lambda_{\beta,\alpha,s}$-\textbf{stable} (resp. $\lambda_{\beta,\alpha,s}$-\textbf{semistable}) if for any proper nonzero suboject $F \hookrightarrow E$ in $\cD^{\beta,\alpha}$
\[
    \lambda_{\beta,\alpha,s}(F) < \lambda_{\beta,\alpha,s}(E) \quad (\text{resp.}\, \le).
\]
We say that $E \in \cD^{\beta,\alpha}$ is $\lambda_{\beta,\alpha,s}$-\textbf{polystable} if
\[
    E \cong \bigoplus_i E_i,
\]
where each $E_i$ is $\lambda_{\beta,\alpha,s}$-stable and satisfies $\lambda_{\beta,\alpha,s}(E_i) = \lambda_{\beta,\alpha,s}(E)$.
\end{defn}

We recall that every $\lambda_{\beta,\alpha,s}$-semistable object $E \in \cD^{\beta,\alpha}$ admits a Jordan-H\"older filtration
\[
    0 = E_0 \subseteq E_1 \subseteq \ldots \subseteq E_m = E
\]
such that each factor $E_i/E_{i-1}$ is $\lambda_{\beta,\alpha,s}$-stable in $\cD^{\beta,\alpha}$ and $\lambda_{\beta,\alpha,s}(E_i/E_{i-1}) = \lambda_{\beta,\alpha,s}(E)$. Note that the corresponding graded module 
\[
    \gr(E) = \bigoplus_i E_i/E_{i-1}
\]
does not depend on the choice of the Jordan-H\"older filtration. We say that two semistable objects $E$ and $E'$ in $\cD^{\beta,\alpha}$ are \textbf{$S$-equivalent} if $\gr(E) \cong \gr(E')$.

\subsection{Moduli of semistable objects}
Fix $s > 0$ and choose a class $v \in \Knum(X)$ of positive rank. We denote by $\cM_{\beta,\alpha}(v) \subset \cD_{\mathrm{pug}}$ the moduli stack of $\lambda_{\beta,\alpha,s}$-semistable objects of class $v$ in $\cD^{\beta,\alpha}$, and by $\rM_{\beta,\alpha}(v)$ its corresponding good moduli space. The existence of the good moduli space is ensured by \cite[Example 7.29]{AHLH}. In this case $\rM_{\beta,\alpha}(v)$ is a proper algebraic space. 

We are interested in studying the Bridgeland moduli space $\rM_{\overline{\beta},\overline{\alpha}}(v)$ when $(\overline{\beta},\overline{\alpha})$ lies on the curve
\[
    \Theta_v \coloneqq \left\{ (\beta,\alpha) \in \R \times \R_{> 0} \mid \ch_2^\beta(v) = \frac{\alpha^2}{2}\ch_0^\beta(v) \right\}
\]
and $\overline{\beta}$ is sufficiently small as in \cite[Section 5]{JLMM2025HigherDTPT}. More precisely, if $(\beta_v,\alpha_v) \in \R \times \R_{> 0}$ is the intersection point between $\Theta_v$ and the largest numerical $\nu$-wall for $v$ to the left of the vertical line $$\{\, (\beta,\alpha) \in \R \times \R_{> 0} \mid \ch_1^\beta(v) = 0 \,\},$$ then we take $\overline{\beta} < \beta_v$. (See \cite[Section 4.1]{JardimMaciocia-Walls} for more details on the numerical $\nu$-walls in this context.)

If the numerical class $v$ is chosen so that $\rk(v)$ and $H^2\cdot \ch_1(v)$ are coprime, and $s > 1/3$ in the definition of the central charge $Z_{\beta,\alpha,s}$, then it was shown in \cite{JLMM2025HigherDTPT} that both the Gieseker moduli space $\rM^\mathrm{\mu}(v)$ of $\mu$-stable sheaves of class $v$ on $X$ and the moduli space $\rM^{\mathrm{PT}}_Z(v)$ of PT-stable objects can be realized as Bridgeland moduli spaces $\rM_{\beta,\alpha}(v)$ for suitable choices of $(\beta,\alpha)$. Moreover, these spaces are separated by a single wall, namely the curve $\Theta_v$, in the $(\beta,\alpha)$-plane. For any point $(\overline{\beta},\overline{\alpha}) \in \Theta_v$ with $\overline{\beta} < \beta_v$ there exists a wall-crossing diagram
\[
    \xymatrix{ \rM^\mathrm{\mu}(v) \ar[rd]^{\gamma} &  &  \rM^{\mathrm{PT}}_Z(v) \ar[ld]_\tau  \\  & \rM_{\overline{\beta},\overline{\alpha}}(v)  &  }
\]
where the morphisms are given by
\[
   \gamma(E) = [E^{[2]} \oplus (E^{[2]}/E)[-1]],\quad \tau(E) = [\cH^0(E) \oplus \cH^1(E)[-1]].
\]
Recall that $E^{[2]}$ is the $2$-closure of $E$, and so $E^{[2]}/E$ is $0$-dimensional; see \cref{def:closure}.

It is known that the Gieseker moduli space $\rM^\mu(v)$ is projective, and we have seen in \cref{sect:projPT} that the PT moduli space $\rM_Z^\mathrm{PT}(v)$ is also projective. In the next section, we complete the picture by showing that the Bridgeland moduli space $\rM_{\overline{\beta},\overline{\alpha}}(v)$ on the wall is projective as well.

\subsection{Projectivity of the Bridgeland moduli space}\label{subsect:projBridgeland}
In what follows, we do not need to assume that the rank and the degree of $v$ are coprime. Our goal is to show the projectivity of the Bridgeland moduli space $\rM_{\overline{\beta},\overline{\alpha}}(v)$ for $(\overline{\beta},\overline{\alpha}) \in \Theta_v$ and $\overline{\beta} < \beta_v$ as above.

We begin with a characterization of Bridgeland semistable objects, which is a restatement of \cite[Proposition 5.1]{JLMM2025HigherDTPT} (itself based on \cite[Propositions 3.2 and 5.2]{JardimMaciocia-Walls}). 

\begin{prop}\label{prop:pointsBMS}
Let $(\overline{\beta},\overline{\alpha}) \in \Theta_v$ with $\overline{\beta} < \beta_v$. Then, every object $E \in \Db(X)$ of class $v$ defines a $\C$-point in $\cM_{\overline{\beta},\overline{\alpha}}(v)$ if and only if $\cH^0(E)$ is $2$-Gieseker-semistable, $\cH^1(E)$ is $0$-dimensional, and $\cH^i(E) = 0$ for $i \neq 0,1$.

The $\C$-points of $\rM_{\overline{\beta},\overline{\alpha}}(v)$ correspond to $S$-equivalence classes of objects $E \in \cD^{\overline{\beta},\overline{\alpha}}$ of class $v$ and of the form $E = F \oplus T[-1]$, where $F$ is a $2$-Gieseker-semistable sheaf, and $T$ is a $0$-dimensional sheaf on $X$. 
\end{prop}

The above shows in fact that the Bridgeland moduli stack $\cM_{\overline{\beta},\overline{\alpha}}(v)$ coincides with the stack $\cM_X(v)$ introduced in \cref{sect:moduli spaces}.

\begin{cor}\label{cor:polystableBMS}
An object $E \in \cD^{\overline{\beta},\overline{\alpha}}$ of class $v$ is $\lambda_{\beta,\alpha,s}$-polystable  if and only if
\[  
    E \cong \left(\bigoplus_i F_i\right) \oplus \left(\bigoplus_j \cO_{p_j}[-1]\right)
\]
where each $F_i$ is a $2$-Gieseker-stable sheaf of slope $\lambda_{\overline{\beta},\overline{\alpha},s}(v) = +\infty$ and of homological dimension at most 1, and each $p_j \in X$ is a closed point. 
\end{cor}
\begin{proof}
We follow the proof of \cite[Proposition 5.1]{JLMM2025HigherDTPT}. By \cref{prop:pointsBMS}, if $E \in \cD^{\overline{\beta},\overline{\alpha}}$ is a $\lambda_{\overline{\beta},\overline{\alpha},s}$-semistable object of class $v$, then $\gr(E) \cong \gr(F \oplus T[-1])$, where
$F$ is a $2$-Gieseker-semistable sheaf, and $T$ is a $0$-dimensional sheaf. In this case both $F$ and $T[-1]$ lie in $\cD^{\overline{\beta},\overline{\alpha}}$ and satisfy
\[
    \lambda_{\overline{\beta},\overline{\alpha},s}(F) = \lambda_{\overline{\beta},\overline{\alpha},s}(T[-1]) = +\infty.
\]
Since we work with a \textit{geometric} Bridgeland stability condition, the Jordan-H\"older graded module associated to $T[-1]$ in $\cD^{\overline{\beta},\overline{\alpha}}$ satisfies
\[
    \gr(T[-1]) \cong \bigoplus_j \cO_{x_j}[-1]
\]
for some closed points $x_j \in X$. 

It remains to compute the graded factors corresponding to $F$. Let $F^{[2]}$ be the 2-closure of $F$ (see \cref{def:closure}). Then $Z_F \coloneqq F^{[2]}/F$ is 0-dimensional, and $F^{[2]}$ is a 2-Gieseker-semistable sheaf of homological dimension 1. The short exact sequence
\[
    0 \to F \to F^{[2]} \to Z_F \to 0
\]
in $\Coh(X)$ yields an exact triangle
\[
    Z_F[-1] \to F \to F^{[2]}
\]
in $\Db(X)$. Note that $Z_F[-1]$ and $F^{[2]}$ lie in $\cD^{\overline{\beta},\overline{\alpha}}$ and satisfy $$\lambda_{\overline{\beta},\overline{\alpha},s}(Z_F[-1]) = \lambda_{\overline{\beta},\overline{\alpha},s}(F) = \lambda_{\overline{\beta},\overline{\alpha},s}(F^{[2]}) = + \infty.$$ As before, we get
\[
    \gr(Z_F[-1]) \cong \bigoplus \cO_{y_k}[-1]
\]
for some closed points $y_k \in X$.

Now consider a Jordan-H\"older filtration of $F^{[2]}$ in $\Coh(X)$
\[
    0 = F_0 \subseteq F_1 \subseteq \ldots \subseteq F_m = F^{[2]}
\]
such that each factor $G_i \coloneqq F_i/F_{i-1}$ is torsion-free of homological dimension at most 1, $2$-Gieseker-stable and satisfying $\rp_{1}(G_i)=\rp_{1}(F^{[2]})$. Here $\rp_{1}$ denotes the reduced truncated Hilbert polynomial as defined in \cref{sect:prelim}. As shown in the proof of \cite[Proposition 5.1]{JLMM2025HigherDTPT}, we have that each $G_i$ is $\lambda_{\overline{\beta},\overline{\alpha},s}$-stable with $\lambda_{\overline{\beta},\overline{\alpha},s}(G_i) = +\infty$. Therefore
\[
    \gr(E) \cong \left(\bigoplus_i G_i\right) \oplus \left(\bigoplus_j \cO_{x_j}[-1] \right) \oplus \left(\bigoplus_k \cO_{y_k}[-1] \right)
\]
as claimed.
\end{proof}

Denote by $\cE$ the universal complex over $\cM_{\overline{\beta},\overline{\alpha}}(v) \times X$, and consider again the line bundle
\[
    \cL_{m,n,a} \coloneqq \lambda_\cE(w_{m,n,a} \cdot [\cO_D]) \in \Pic(\cM_{\overline{\beta},\overline{\alpha}}(v))
\]
where $w_{m,n,a}$ is as in \eqref{eq:grothClass} and $D \in |aH|$ is a divisor. As in \cref{L2descendstogms}, one obtains that $\cL_{m,n,a}$ descends to a line bundle $L_{m,n,a}$ on the good moduli space $\rM_{\overline{\beta},\overline{\alpha}}(v)$.

\begin{prop}\label{prop:globGenBMS}
There are integers $m,n,a > 0$ such that 
\begin{enumerate}[(a)]
    \item the line bundle $\cL_{m,n,a}$ is semiample over $\cM_{\overline{\beta},\overline{\alpha}}(v)$.
    \item for every morphism $S \to \cM_{\overline{\beta},\overline{\alpha}}(v)$ from a smooth, proper, connected curve $S$ with $\deg(\cL_{m,n,a}|_S = 0)$, if $E_1, E_2 \in \cM_{\overline{\beta},\overline{\alpha}}(v)$ are $\C$-points lying in the image of $S$, then $$\gr(\cH^0(E_1))^{[2]} \cong \gr(\cH^0(E_2))^{[2]}.$$
\end{enumerate}
\end{prop}
\begin{proof}
By \cref{prop:pointsBMS}, we see that the moduli stacks $\cM_{\overline{\beta},\overline{\alpha}}(v)$ and $\cM_X(v)$ coincide. Then the result follows from \cref{thm:SemiAmple} and \cref{prop:L1fiber}.
\end{proof}

Choose integers $m,n,a > 0$ as in \cref{prop:globGenBMS}. In particular, the line bundle $L_{m,n,a}$ is semiample on the good moduli space $\rM_{\overline{\beta},\overline{\alpha}}(v)$. Therefore for some $k > 0$ there exists a morphism
\[
    \varphi : \rM_{\overline{\beta},\overline{\alpha}}(v) \to \P\coloneqq \P \Gamma(\rM_{\overline{\beta},\overline{\alpha}}(v), L_{m,n,a}^{\otimes k})
\]
determined by the linear system $|L_{m,n,a}^{\otimes k}|$. Additionally, by choosing $k$ sufficiently large, we may assume that $\varphi$ has connected fibers. Fix a point $p \in \P$ and consider the commutative diagram
\[
    \begin{tikzcd}
        \cM_{\overline{\beta},\overline{\alpha}}(v) \ar[r] & \rM_{\overline{\beta},\overline{\alpha}}(v) \ar[r,"\varphi"] & \P \\
        \sM_{p} \ar[r,"\pi"] \ar[u,hookrightarrow] & \rM_{p} \ar[r] \ar[u,hookrightarrow] & \Spec \C \ar[u,"p"]
    \end{tikzcd}
\]
where $\rM_{p}$ and $\sM_{p}$ are the corresponding fibers of $\rM_{\overline{\beta},\overline{\alpha}}(v)$, resp. $\cM_{\overline{\beta},\overline{\alpha}}(v)$, over $p$. 

In what follows, for every $\C$-point $t \in \cM_{p}$, we denote as usual
\[
    E_t = \cE|^\LL_{\{t\} \times X}, \quad  F_t = \cH^0(E_t), \quad T_t = \cH^1(E_t).
\]

\begin{prop}\label{prop:sepFiberBMS}
There exists a $2$-Gieseker-semistable sheaf $F$ of homological dimension at most $1$ and of slope $\lambda_{\overline{\beta},\overline{\alpha},s}(F) = +\infty$ such that every $\C$-point $t \in \rM_{p}$ is $S$-equivalent to an object of the form
\[
    F \oplus \left(\bigoplus_{j \in J_t} \cO_{p_j}[-1]\right),
\]
where the $p_j \in X$ are closed points. 
\end{prop}
\begin{proof}
Let $S$ be a smooth, proper and connected curve, and consider a map $S \to \rM_p$. After taking a finite cover of $S$, we may assume by \cite[Lemma 5.4]{tajakka2022uhlenbeck} that the map lifts to $S \to \cM_{\overline{\beta},\overline{\alpha}}(v)$, and that we have $\deg(\cL_{m,n,a}|_S) = 0$. Now we can apply \cref{prop:globGenBMS} (b), which shows that the sheaves $\gr(F_t)^{[2]}$ are isomorphic as $t$ varies in the image of $S$. Since $\rM_p$ is a proper and connected algebraic space, it can be covered by smooth, proper and connected curves $S \to \rM_p$. Therefore, $\gr(F_t)^{[2]}$ does not vary with $t \in \rM_p$. Using \cref{cor:polystableBMS} we get the desired result.
\end{proof}

The zero-dimensional component of $\gr(E_t)$ is uniquely associated to an effective $0$-cycle $$C_t = \sum_{j \in J_t} n_j\langle p_j \rangle$$ on $X$. Note that $t \mapsto \deg(C_t)$ is a constant function on $\rM_{p}(\C)$, since the objects $E_t$ and $F$ have fixed Hilbert polynomials. We set $\ell \coloneqq \deg(C_t)$.

\begin{prop}\label{prop:0Fibers}
The fiber $\rM_{p}$ over $p$ is $0$-dimensional. 
\end{prop}
\begin{proof}
Consider the moduli stack $\sN^\ell$ of $0$-dimensional sheaves of length $\ell$ on $X$, and denote by $\psi: \sN^\ell \to \rN^\ell$ its corresponding good moduli space. Recall that $\rN^\ell$ is a projective scheme, as it is isomorphic to the $\ell$-th symmetric product 
\[
    \mathrm{S}^\ell(X) \coloneqq \underbrace{(X \times \cdots \times X)}_{\ell \text{ times}}/\mathrm{S}_\ell
\]
where $\mathrm{S}_\ell$ is the symmetric group. Denote by $q$ the natural projection $\sN^\ell \times X \to X$, and let $\cT$ be the universal family of $0$-dimensional sheaves over $\sN^\ell \times X$. Then
\[
    (q^*F) \oplus \cT[-1] \in \Db(\sN^\ell \times X)
\]
is an $\sN^\ell$-perfect complex of Bridgeland semistable objects on $X$, which induces a natural commutative diagram
\[
    \begin{tikzcd}
        \sN^\ell \ar[r,"\psi"] \ar[d,"\gamma"] & \rN^\ell  \ar[d,"\varphi"] \\
        \sM_{p} \ar[r,"\pi"] & \rM_{p}
    \end{tikzcd}
\]
Clearly $\varphi$ is a finite surjective morphism, given the description of the geometric points of $\rM_{p}$ in \cref{prop:sepFiberBMS}. Furthermore, letting $\cE_{p}$ denote the restriction of the universal complex $\cE$ to $\cM_{p} \times X$, we have the following chain of isomorphisms
\begin{align*}
     \psi^*\varphi^*L_{m,n,a} \cong \gamma^*\pi^* L_{m,n,a} \cong{}& \gamma^*\lambda_{\cE_{p}}(w_{m,n,a}\cdot [\cO_D]) \\ 
     \cong{}& \lambda_{q^*F}(w_{m,n,a}\cdot [\cO_D]) \otimes \lambda_\cT(w_{m,n,a}\cdot [\cO_D])^{\vee} \\
     \cong{}& \lambda_\cT(w_{m,n,a}\cdot [\cO_D])^{\vee}\\
     \cong{}& \lambda_\cT(h)^{\otimes b}
\end{align*}
for some integer $b$. The last isomorphism follows since $w_{m,n,a}\cdot [\cO_D]$ is a linear combination of $h, h^2,h^3$ in $\rK(X)$, and $\lambda_\cT(h^{\otimes k})$ is trivial for $k > 1$ (as $\cT$ is a flat family of $0$-dimensional sheaves). 

We also know that $\lambda_\cT(h)$ descends to an ample line bundle on $\rN^\ell$; see \cite[Example 8.2.1]{HL} or \cite[Section 2.2.1]{pavel2021moduli} for details. Since $\varphi$ is finite and surjective, we obtain that $L_{m,n,a} \cong \cO_{\rM_{p}}$ is ample over $\rM_{p}$. This is possible only if $\rM_{p}$ is $0$-dimensional.
\end{proof}

\begin{thm}\label{thm:BMSproj}
There exist integers $m,n,a > 0$ such that $L_{m,n,a}$ is ample on $\rM_{\overline{\beta},\overline{\alpha}}(v)$. In particular, $\rM_{\overline{\beta},\overline{\alpha}}(v)$ is a projective scheme.
\end{thm}
\begin{proof}
We choose integers $m,n,a > 0$ as in \cref{prop:globGenBMS}, and consider the proper map $\varphi : \rM_{\overline{\beta},\overline{\alpha}}(v) \to \P$ defined above, which satisfies $\varphi^*(\cO_\P(1)) \cong L_{m,n,a}^{\otimes k}$ for some power $k > 0$. By \cref{prop:0Fibers}, we see that $\varphi$ has finite fibers, from which it follows by \cite[Proposition 3.1]{Olsson-Star} that $\rM_{\overline{\beta},\overline{\alpha}}(v)$ is a scheme.  Therefore $\varphi$ is in fact finite, cf. \cite[Tag 02LS]{stacks-project}, implying that the pullback $\varphi^*(\cO_\P(1)) \cong L_{m,n,a}^{\otimes k}$ of the ample line bundle $\cO_\P(1)$ on $\P$ is ample on $\rM_{\overline{\beta},\overline{\alpha}}(v)$. Hence $L_{m,n,a}$ is also ample, and $\rM_{\overline{\beta},\overline{\alpha}}(v)$ is projective.
\end{proof}


\bibliographystyle{alpha}
\addcontentsline{toc}{section}{References}
\bibliography{mainbib}

\medskip
\medskip
\begin{center}
\rule{0.4\textwidth}{0.4pt}
\end{center}
\medskip
\medskip

\end{document}